\documentclass[oneside]{amsart}
\usepackage[T1]{fontenc}
\UseRawInputEncoding
\usepackage{url}
\usepackage{amsbsy}
\usepackage{amstext}
\usepackage{amsthm}
\usepackage{amssymb}
\usepackage{stmaryrd}
\usepackage{geometry}
\makeatletter
\numberwithin{equation}{section}
\numberwithin{figure}{section}
\theoremstyle{plain}
\newtheorem{thm}{\protect\theoremname}
\theoremstyle{plain}
\newtheorem{conjecture}[thm]{\protect\conjecturename}
\theoremstyle{definition}
\newtheorem{defn}[thm]{\protect\definitionname}
\theoremstyle{plain}
\newtheorem{lem}[thm]{\protect\lemmaname}
\theoremstyle{plain}
\newtheorem{cor}[thm]{\protect\corollaryname}
\theoremstyle{remark}
\newtheorem{rem}[thm]{\protect\remarkname}
\theoremstyle{plain}
\newtheorem{prop}[thm]{\protect\propositionname}

\usepackage{listings}
\usepackage{shuffle}

\makeatother

\providecommand{\conjecturename}{Conjecture}
\providecommand{\corollaryname}{Corollary}
\providecommand{\definitionname}{Definition}
\providecommand{\lemmaname}{Lemma}
\providecommand{\propositionname}{Proposition}
\providecommand{\remarkname}{Remark}
\providecommand{\theoremname}{Theorem}

\begin{document}
\address[Minoru Hirose]{Graduate School of Science and Engineering, Kagoshima University, 1-21-35 Korimoto, Kagoshima, Kagoshima 890-0065, Japan}
\email{hirose@sci.kagoshima-u.ac.jp}
\subjclass[2020]{Primary 11M32; Secondary 05A30, 11G55, 33E20}
\keywords{iterated $q$-integrals, duality, multiple $q$-polylogarithms, multiple $q$-zeta values}
\title{Conjectural duality for iterated $q$-integrals on $\mathbb{P}^{1}$
minus four generic points}
\author{Minoru Hirose}
\begin{abstract}
We propose a conjectural $q$-analogue of the classical duality for
iterated integrals on $\mathbb{P}^{1}$ minus four points, arising
from the involutive M\"{o}bius transformation which exchanges the
four marked points in pairs. To this end, we introduce iterated $q$-integrals
with position-dependent $q$-shifts of the parameters and define a
functional on admissible words in the six pairwise letters. The conjecture
states that this functional is invariant under a natural anti-automorphism
of the word algebra. We relate the conjecture to Yamamoto's duality
for one-variable multiple $q$-polylogarithms. Finally, we prove the
conjecture in several special cases. 
\end{abstract}

\maketitle

\section{Introduction}

The main purpose of this paper is to formulate a conjectural $q$-analogue
of a classical duality for iterated integrals on the four-punctured
projective line, arising from an involutive M\"{o}bius transformation
that exchanges the four marked points in pairs. After briefly recalling
the background and the classical case, we state the conjecture.

\subsection{Background}

Iterated integrals of logarithmic differential forms on punctured
projective lines play an important role in several areas of mathematics.
From Chen's viewpoint \cite{Chen}, they provide a non-abelian refinement
of de Rham theory and encode the unipotent fundamental group \cite{Hain}.
Such iterated integrals give rise to hyperlogarithms and multiple
polylogarithms \cite{Goncharov_MPL}, and their special values include
multiple zeta values. They are also naturally related to mixed Hodge--Tate
structures, mixed Tate motives \cite{Goncharov_MPL}, and the geometry
of the moduli spaces $M_{0,n}$ \cite{BrownModuli}.

To describe the duality considered in this paper, it is convenient
to use the logarithmic differential forms 
\[
\omega_{a,b}:=\left(\frac{1}{t-a}-\frac{1}{t-b}\right)dt\qquad(a,b\in\mathbb{P}^{1}(\mathbb{C})).
\]
For any M\"{o}bius transformation $\varphi$, one has 
\[
\varphi^{*}\omega_{a,b}=\omega_{\varphi^{-1}(a),\,\varphi^{-1}(b)},
\]
and hence $\varphi$ induces relations among the corresponding iterated
integrals. In particular, for
\[
\mathcal{D}=\{A,B,C,D\}\subset\mathbb{P}^{1}(\mathbb{C}),
\]
there is an involutive M\"{o}bius transformation exchanging $A$ with
$D$ and $B$ with $C$. The resulting duality gives a natural identity
for iterated integrals on $\mathbb{P}^{1}(\mathbb{C})\setminus\mathcal{D}$.
In particular, this includes the classical duality relation for multiple
zeta values as a special case. The conjecture considered in this paper
is a $q$-analogue of this duality.

\subsection{\label{subsec:notation}Notation}

To state the classical duality and its $q$-analogue in a parallel
form, we introduce the following notation. Let $W$ be the set of
finite words in the six letters $e_{AB},\ e_{AC},\ e_{AD},\ e_{BC},\ e_{BD},\ e_{CD}.$
Although the symbols $A,B,C,D$ will also be used as parameters, the
letters $e_{AB},\dots,e_{CD}$ are regarded as purely formal symbols.
Thus, for example, $e_{AB}$ and $e_{AC}$ are distinct even if $B=C$.
When there is no risk of confusion, we simply write $AB,\dots,CD$,
or $(AB),\dots,(CD)$, in place of $e_{AB},\dots,e_{CD}$. Define
an anti-automorphism $\tau:W\to W$ by
\[
\tau(e_{uv}):=e_{\sigma(v)\sigma(u)}\qquad(\sigma(A)=D,\ \sigma(B)=C,\ \sigma(C)=B,\ \sigma(D)=A).
\]
We say that $w\in W$ is admissible if it does not begin with $e_{Av}$
$(v=B,C,D)$ and does not end with $e_{uD}$ $(u=A,B,C)$. We denote
by $W^{0}$ the set of admissible words in $W$.

\subsection{\label{subsec:C_case}The classical case}

For $x,y,u_{1},\dots,u_{k},v_{1},\dots,v_{k}\in\mathbb{R}$ satisfying
\begin{itemize}
\item $x\leq y$,
\item for every $t$ with $x<t<y$, one has $t\notin\{u_{1},\dots,u_{k},v_{1},\dots,v_{k}\}$,
\item $x\notin\{u_{1},v_{1}\}$ and $y\notin\{u_{k},v_{k}\}$,
\end{itemize}
we define the following variant of Goncharov's iterated integral symbol
\cite{Goncharov_MPL}:
\[
I(x;[u_{1},v_{1}],\dots,[u_{k},v_{k}];y):=\int_{x<t_{1}<\cdots<t_{k}<y}\prod_{j=1}^{k}\left(\frac{1}{t_{j}-u_{j}}-\frac{1}{t_{j}-v_{j}}\right)dt_{j}.
\]

The following theorem gives the classical case of our conjecture,
and we include it for completeness.
\begin{thm}
\label{thm:Duality_for_real_case}Let $A,B,C,D$ be distinct real
numbers such that $A<D$ and $B,C\notin[A,D]$. Define $L:W^{0}\to\mathbb{R}$
by
\[
L((u_{1}v_{1})\cdots(u_{k}v_{k})):=I(A;[u_{1},v_{1}],\dots,[u_{k},v_{k}];D).
\]
Then, for every $w\in W^{0}$, we have
\[
L(w)=L(\tau(w)).
\]
\end{thm}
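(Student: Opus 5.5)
The plan is to realize the duality geometrically by the involutive M\"{o}bius transformation exchanging the four points, and to check that it carries the integration path $[A,D]$ to itself with reversed orientation. First, let $\varphi$ be the unique M\"{o}bius transformation with $\varphi(A)=D$, $\varphi(D)=A$ and $\varphi(B)=C$. A M\"{o}bius map that swaps two points is an involution, so $\varphi(C)=B$ and $\varphi^{-1}=\varphi$. Since $A,B,C,D$ are real, $\varphi$ has real coefficients and restricts to a homeomorphism of $\mathbb{P}^{1}(\mathbb{R})$.

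Next I show that $\varphi$ maps the open interval $(A,D)$ onto itself, reversing orientation. The set $\mathbb{P}^{1}(\mathbb{R})\setminus\{A,D\}$ has two components, the arcs $(A,D)$ and the complementary arc. Because $\varphi$ swaps $A$ and $D$, it permutes these two components. Since $B\notin[A,D]$, the point $C=\varphi(B)$ does not lie in $\varphi((A,D))$. But $C$ lies in the complementary arc, because $C\notin[A,D]$. Hence $\varphi((A,D))=(A,D)$. As the endpoints are exchanged, $\varphi$ is decreasing on $(A,D)$, so it is a diffeomorphism of $(A,D)$ reversing orientation.

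Now I substitute $t_{j}=\varphi(s_{k+1-j})$ in the integral defining $L(w)$, for $w=(u_{1}v_{1})\cdots(u_{k}v_{k})$. Under this substitution the simplex $A<t_{1}<\cdots<t_{k}<D$ maps bijectively to $A<s_{1}<\cdots<s_{k}<D$. Using $\varphi^{*}\omega_{a,b}=\omega_{\varphi^{-1}(a),\varphi^{-1}(b)}=\omega_{\sigma(a),\sigma(b)}$, the $j$-th form becomes $\omega_{\sigma(u_{j}),\sigma(v_{j})}$, now attached to the variable in position $k+1-j$. Two sign contributions arise:
\begin{itemize}
\item the orientation reversal of each one-dimensional integration contributes $(-1)^{k}$;
\item rewriting $\omega_{\sigma(u_{j}),\sigma(v_{j})}=-\omega_{\sigma(v_{j}),\sigma(u_{j})}$ for each $j$ contributes another $(-1)^{k}$.
\end{itemize}
These cancel, and the result is exactly
\[
I(A;[\sigma(v_{k}),\sigma(u_{k})],\dots,[\sigma(v_{1}),\sigma(u_{1})];D)=L(\tau(w)).
\]
I also check that $\tau$ preserves $W^{0}$. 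The word $w$ does not begin with a letter $e_{Av}$ exactly when $\tau(w)$ does not end with a letter $e_{\sigma(v)D}$, and symmetrically at the other end. So both sides are defined.

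The main obstacle is analytic: justifying the change of variables for possibly improper integrals. All poles $u_{j},v_{j}$ lie in $\{A,B,C,D\}$, hence outside the open interval $(A,D)$, so singularities can occur only at the endpoints. Admissibility means the first form has no pole at $A$ and the last has none at $D$. Intermediate forms may have simple poles at $A$ or $D$, but the resulting singularities are logarithmic. I would show the integrals converge absolutely, by induction on $k$, bounding the inner iterated integrals by powers of $|\log(t-A)|$ and $|\log(D-t)|$. With absolute convergence in hand, the substitution is legitimate by the change-of-variables theorem for the diffeomorphism $\varphi$ of the open simplex, or equivalently by truncating to $[A+\varepsilon,D-\varepsilon']$ and letting $\varepsilon,\varepsilon'\to0$.
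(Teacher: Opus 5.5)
Your proposal is correct and follows the paper's route. The paper uses the same involution $\varphi$ sending $(A,B,C,D)$ to $(D,C,B,A)$, followed by the substitution $t_{j}\mapsto\varphi(t_{j})$ and the reversal $t_{j}\mapsto t_{k+1-j}$, with the factor $(-1)^{k}$ absorbed by swapping $u_{j},v_{j}$. You additionally supply details the paper leaves implicit: that $\varphi$ maps $(A,D)$ onto itself reversing orientation, that $\tau$ preserves $W^{0}$, and that the admissible integrals converge absolutely, which justifies the change of variables.
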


\begin{proof}
Let $\varphi$ be a M\"{o}bius transformation which sends $(A,B,C,D)$
to $(D,C,B,A)$. For $w=e_{u_{1}v_{1}}\cdots e_{u_{k}v_{k}}$, the
change of variables $t_{j}\mapsto\varphi(t_{j})$ transforms the integral
defining $L(w)$ into 
\[
(-1)^{k}\int_{D>t_{1}>\cdots>t_{k}>A}\prod_{j=1}^{k}\left(\frac{1}{t_{j}-\varphi^{-1}(u_{j})}-\frac{1}{t_{j}-\varphi^{-1}(v_{j})}\right)dt_{j}.
\]
Reversing the order of the variables by $t_{j}\mapsto t_{k+1-j}$,
we obtain $L(\tau(w))$.
\end{proof}

\subsection{\label{subsec:q_case}The $q$-case and the conjecture}

Let $K$ be a field and let $q\in K^{\times}$. For simplicity, we
assume that $q^{N}\neq1$ for every $N\in\mathbb{Z}_{\geq1}$. We
define a partial order on $K^{\times}$ by 
\[
x\trianglelefteq y\Longleftrightarrow y/x\in\{q^{-n}\mid n\in\mathbb{Z}_{\geq0}\}.
\]
Suppose that $x,y\in K^{\times}$ and $u_{1},\dots,u_{k},v_{1},\dots,v_{k}\in K$
satisfy the following conditions: 
\begin{itemize}
\item $x\trianglelefteq y$, 
\item for every $t$ such that $x\trianglelefteq t\trianglelefteq y$, we
have $t\notin\{u_{1},\dots,u_{k},v_{1},\dots,v_{k}\}$. 
\end{itemize}
Then we define the corresponding iterated $q$-integral symbol by
\[
I_{q}(x;[u_{1},v_{1}],\dots,[u_{k},v_{k}];y):=\sum_{x\trianglelefteq t_{1}\trianglelefteq\cdots\trianglelefteq t_{k}\trianglelefteq y}\prod_{j=1}^{k}\left(\frac{t_{j}}{t_{j}-u_{j}}-\frac{t_{j}}{t_{j}-v_{j}}\right)\in K.
\]
This symbol has the following classical limit: for $0<x<y$, setting
$q_{N}=(x/y)^{1/N}$, one has 
\[
I(x;[u_{1},v_{1}],\dots,[u_{k},v_{k}];y)=\lim_{N\to\infty}(1-q_{N})^{k}I_{q_{N}}(x;[u_{1},v_{1}],\dots,[u_{k},v_{k}];y).
\]

The following is the main conjecture of this paper.
\begin{conjecture}
\label{conj:main}Let $N\geq0$, and let $K$ be the field of fractions
of $\mathbb{Q}[A,B,C,D,q]/(A-q^{N}D)$. Define $L_{q}:W^{0}\to K$
by 
\[
L_{q}((u_{1}v_{1})\cdots(u_{k}v_{k})):=I_{q}(A;[u_{1}^{(1)},v_{1}^{(1)}],\dots,[u_{k}^{(k)},v_{k}^{(k)}];D).
\]
Here, for $u_{j},v_{j}\in\{A,B,C,D\}$, the quantities $u_{j}^{(j)}$
and $v_{j}^{(j)}$ are obtained by multiplying $u_{j}$ and $v_{j}$
by suitable powers of $q$, explicitly as follows: 
\begin{align*}
A^{(j)} & :=Aq^{\#\{h\leq j\,\mid\,u_{h}v_{h}\in\{BC,BD,CD\}\}},\\
B^{(j)} & :=Bq^{\#\{h\leq j\,\mid\,u_{h}v_{h}\notin\{AC,AD\}\}+\#\{h\geq j\,\mid\,u_{h}v_{h}=CD\}},\\
C^{(j)} & :=Cq^{\#\{h\leq j\,\mid\,u_{h}v_{h}\notin\{AB,AD\}\}+\#\{h\geq j\,\mid\,u_{h}v_{h}=BD\}},\\
D^{(j)} & :=Dq^{-\#\{h\geq j\,\mid\,u_{h}v_{h}\in\{AB,AC,BC\}\}}.
\end{align*}
Then, for any $w\in W^{0}$, we have
\[
L_{q}(w)=L_{q}(\tau(w)).
\]
\end{conjecture}

The $q$-shifts appearing in the conjecture were suggested by numerical
experiments. The SageMath code used for computational checks of the
conjecture is available at \cite{duality_code}.

In later sections, we will consider various specializations of the
parameters $A,B,C,D$. Although such substitutions are not defined
for arbitrary elements of $K$, the expressions $L_{q}(w)$ arising
from the above definition admit these specializations in all cases
considered in this paper. We use the notation $F_{A=a,\,B=b,\,C=c,\,D=d}$
to denote the simultaneous specialization of the parameters $A,B,C,D$
to $a,b,c,d$, respectively; when only some variables are displayed,
only those variables are specialized.

The remainder of this paper is organized as follows. In Section~2,
we collect preliminary material. In Section~3, we explain how Yamamoto's
duality for one-variable multiple $q$-polylogarithms can be viewed
as a special case of Conjecture~\ref{conj:main}. In Section~4,
we prove Conjecture~\ref{conj:main} in several special cases.

\subsection*{Acknowledgements}

This work was supported by JSPS KAKENHI Grant Number JP22K03244.

\section{Preliminaries}
\begin{defn}
For positive integers $k_{1},\dots,k_{d}$, we define the multiple
$q$-polylogarithms, following Zhao \cite{Zhao}, by 
\begin{align*}
\mathrm{Li}_{q;k_{1},\dots,k_{d}}(z_{1},\dots,z_{d}) & :=\sum_{0<n_{1}<\cdots<n_{d}}\frac{z_{1}^{n_{1}}\cdots z_{d}^{n_{d}}}{(1-q^{n_{1}})^{k_{1}}\cdots(1-q^{n_{d}})^{k_{d}}}\\
 & =\sum_{0<n_{1}<\cdots<n_{d}}\frac{x_{1}^{n_{1}}x_{2}^{n_{2}-n_{1}}\cdots x_{d}^{n_{d}-n_{d-1}}}{(1-q^{n_{1}})^{k_{1}}\cdots(1-q^{n_{d}})^{k_{d}}}\in\mathbb{Q}[[q,x_{1},\dots,x_{d}]],
\end{align*}
where we set $x_{j}:=\prod_{i=j}^{d}z_{i}$. 
\end{defn}

\begin{defn}
For $x,y$ with $x\trianglelefteq y$, we define 
\[
I_{q}(x;u_{1},\dots,u_{k};y):=\sum_{x\trianglelefteq t_{1}\trianglelefteq\cdots\trianglelefteq t_{k}\trianglelefteq y}\prod_{j=1}^{k}\frac{t_{j}}{t_{j}-u_{j}}.
\]
We extend this definition to the case $x=0$ by setting 
\begin{align*}
I_{q}(0;u_{1},\dots,u_{k};y) & :=\lim_{N\to\infty}I_{q}(yq^{N};u_{1},\dots,u_{k};y)\\
 & =\sum_{t_{1}\trianglelefteq\cdots\trianglelefteq t_{k}\trianglelefteq y}\prod_{j=1}^{k}\frac{t_{j}}{t_{j}-u_{j}}.
\end{align*}
\end{defn}

It is known that multiple $q$-polylogarithms admit the following
$q$-integral expression.
\begin{thm}[{\cite[Corollary 7.4]{Zhao}}]
\label{thm:series_expression}We have
\[
\mathrm{Li}_{q;k_{1},\dots,k_{d}}(z_{1},\dots,z_{d})=(-1)^{d}I_{q}(0;a_{1},\{0\}^{k_{1}-1},\dots,a_{d},\{0\}^{k_{d}-1};1)
\]
where we set
\[
a_{i}:=\frac{1}{z_{i}\cdots z_{d}}\qquad(1\leq i\leq d).
\]
\end{thm}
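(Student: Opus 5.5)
We will prove the identity by expanding the right-hand side directly as a multiple series and matching it term by term with the second expression for $\mathrm{Li}_{q;k_1,\dots,k_d}$ in the definition. Throughout we work in $\mathbb{Q}[[q,x_1,\dots,x_d]]$. Here $x_i=z_i\cdots z_d=1/a_i$, so $a_i^{-1}=x_i$ is treated as a formal variable. All rearrangements below are then legitimate, because each monomial $q^{M}x_1^{c_1}\cdots x_d^{c_d}$ receives only finitely many contributions.

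\emph{Step 1 (parametrize the summation).} Set $k=k_1+\cdots+k_d$. By the definition of $\trianglelefteq$, the condition $t_1\trianglelefteq\cdots\trianglelefteq t_k\trianglelefteq 1$ means $t_j=q^{m_j}$ with $m_1\ge m_2\ge\cdots\ge m_k\ge0$. Let $p_i:=k_1+\cdots+k_{i-1}+1$ be the position of the letter $a_i$ in the word $(a_1,\{0\}^{k_1-1},\dots,a_d,\{0\}^{k_d-1})$. For the letters $u_j=0$ the factor $t_j/(t_j-u_j)$ equals $1$. For $u_j=a_i$ we expand
\[
\frac{t}{t-a_i}=-\frac{t/a_i}{1-t/a_i}=-\sum_{n\ge1}x_i^{n}t^{n}.
\]
Hence the right-hand side becomes
\[
(-1)^d\cdot(-1)^d\sum_{m_1\ge\cdots\ge m_k\ge0}\ \sum_{n_1,\dots,n_d\ge1}\prod_{i=1}^{d}x_i^{n_i}q^{n_i m_{p_i}},
\]
and the two signs cancel.

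\emph{Step 2 (sum over the $m$'s).} Introduce differences $\delta_l:=m_l-m_{l+1}\ge0$ for $l<k$ and $\delta_k:=m_k\ge0$, so that $m_j=\sum_{l\ge j}\delta_l$ with the $\delta_l$ independent and unrestricted in $\mathbb{Z}_{\ge0}$. Then
\[
\sum_i n_i m_{p_i}=\sum_{l=1}^{k}\delta_l N_{i(l)},
\]
where $N_i:=n_1+\cdots+n_i$ and $i(l)$ is the index of the block containing position $l$ (that is, the largest $i$ with $p_i\le l$). Each geometric sum over $\delta_l$ yields $(1-q^{N_{i(l)}})^{-1}$. Block $i$ has exactly $k_i$ positions, so we obtain
\[
\sum_{n_1,\dots,n_d\ge1}\prod_{i}\frac{x_i^{n_i}}{(1-q^{N_i})^{k_i}}.
\]

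\emph{Step 3 (change of summation variables).} Rename $N_i$ as the new summation indices $0<N_1<\cdots<N_d$, so that $n_i=N_i-N_{i-1}$. The sum becomes exactly the second expression for $\mathrm{Li}_{q;k_1,\dots,k_d}(z_1,\dots,z_d)$ in the definition, which proves the theorem.

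The only delicate point is Step 1: the expansion of $t/(t-a_i)$ and the interchange of the infinite sums. Working formally in $x_i=1/a_i$ and $q$ settles it, since for a fixed monomial the exponents of the $x_i$ bound the $n_i$, and then the $q$-degree bounds the $m_j$.
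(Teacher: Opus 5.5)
Your proof is correct, and it takes a genuinely different route from the paper. The paper does no computation: it cites Zhao's Corollary~7.4 and says the identity follows ``up to normalization.''

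You verify the identity from scratch, in four moves:
\begin{itemize}
\item Expanding each factor as $t/(t-a_i)=-\sum_{n\ge1}(x_it)^n$ produces a sign $(-1)^d$ that cancels the prefactor.
\item The gap variables $\delta_l$ turn the constraint $m_1\ge\cdots\ge m_k\ge0$ into independent geometric sums, whose exponents are the partial sums $N_{i(l)}$.
\item Each block of $k_i$ positions then contributes $(1-q^{N_i})^{-k_i}$.
\item Re-indexing by $0<N_1<\cdots<N_d$ gives exactly the second expression in the definition of $\mathrm{Li}_{q;k_1,\dots,k_d}$.
\end{itemize}

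Your finiteness check also shows that the limit defining $I_q(0;\dots;1)$ exists $q$-adically in $\mathbb{Q}[[q,x_1,\dots,x_d]]$. The point is that fixed $x$-exponents determine the $n_i$, and the $q$-degree is at least $m_1\ge m_j$. This is the sense in which the statement must be read.

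The citation buys brevity and a link to Zhao's framework. Your computation buys self-containedness. It also checks explicitly the conventions the paper hides in ``up to normalization'': the ordering of the $t_j$ under $\trianglelefteq$, the placement of $a_i$ at the start of each block, the sign $(-1)^d$, and the identification $a_i^{-1}=x_i$.
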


\begin{proof}
By \cite[Corollary 7.4]{Zhao}, the statement follows from a direct
comparison with our definition, up to normalization.
\end{proof}
The iterated $q$-integrals satisfy the following $q$-difference
formula.
\begin{lem}[$q$-difference formula]
\label{lem:difference_formula}Let $0\leq h\leq k$. Assume $a_{h}\neq a_{h+1}$.
Then, we have
\begin{align*}
 & I_{q}(a_{0};a_{1},\dots,a_{k};a_{k+1})-I_{q}(a_{0};a_{1},\dots,a_{h},a_{h+1}q,\dots,a_{k}q;a_{k+1}q)\\
 & =\begin{cases}
\frac{a_{h}I_{q}(a_{0};a_{1},\dots,\widehat{a_{h+1}},\dots,a_{k};a_{k+1})}{a_{h}-a_{h+1}} & h<k\\
0 & h=k
\end{cases}\\
 & \quad+\begin{cases}
\frac{a_{h+1}I_{q}(a_{0};a_{1},\dots,\widehat{a_{h}},\dots,a_{k};a_{k+1})}{a_{h+1}-a_{h}} & h>0\\
0 & h=0.
\end{cases}
\end{align*}
\end{lem}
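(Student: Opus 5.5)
The plan is to compare the two sums term by term. After a change of summation variables, the two chains range over almost the same set of points, so the difference will be a sum over a small "boundary" set of chains. On that set a partial-fraction identity produces the stated right-hand side. Throughout, $t\triangleleft s$ means $t\trianglelefteq s$ and $t\neq s$.

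\textbf{Step 1: rescale the second sum.} In $I_{q}(a_{0};a_{1},\dots,a_{h},a_{h+1}q,\dots,a_{k}q;a_{k+1}q)$, substitute $t_{j}=qs_{j}$ for $j>h$ and leave $t_{1},\dots,t_{h}$ unchanged.
\begin{itemize}
\item For $j>h$ the factor $\frac{t_{j}}{t_{j}-a_{j}q}$ becomes $\frac{s_{j}}{s_{j}-a_{j}}$.
\item The conditions $t_{j}\trianglelefteq t_{j+1}$ for $j>h$ become $s_{j}\trianglelefteq s_{j+1}$.
\item The upper condition $qs_{k}\trianglelefteq a_{k+1}q$ becomes $s_{k}\trianglelefteq a_{k+1}$.
\item The junction condition $t_{h}\trianglelefteq qs_{h+1}$ means $s_{h+1}/t_{h}\in\{q^{-n-1}\mid n\geq0\}$, that is, $t_{h}\triangleleft s_{h+1}$. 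When $h=0$ we read $t_{0}=a_{0}$, so this becomes $a_{0}\triangleleft s_{1}$.
\end{itemize}
So for $0\leq h<k$ the second sum is the same summand as the first, summed over chains with $t_{h}\neq t_{h+1}$. Hence the difference is the sum over chains with $t_{h}=t_{h+1}$, where $t_{0}=a_{0}$ when $h=0$.

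For $h=k$ no variable is rescaled. The only change is that the upper bound $t_{k}\trianglelefteq a_{k+1}q$ excludes $t_{k}=a_{k+1}$. The difference is then the sum over chains with $t_{k}=a_{k+1}$, whose last factor is $\frac{a_{k+1}}{a_{k+1}-a_{k}}$, multiplied by $I_{q}(a_{0};a_{1},\dots,a_{k-1};a_{k+1})$. This is exactly the stated formula, and the first bracket vanishes as required.

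\textbf{Step 2: the case $h=0<k$.} Setting $t_{1}=a_{0}$ gives the factor $\frac{a_{0}}{a_{0}-a_{1}}$ times $I_{q}(a_{0};a_{2},\dots,a_{k};a_{k+1})$. This matches the first bracket, and the second bracket is $0$.

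\textbf{Step 3: the case $0<h<k$.} Merge $t_{h}=t_{h+1}=t$. The chain then has one fewer variable, and its summand contains the product $\frac{t}{t-a_{h}}\cdot\frac{t}{t-a_{h+1}}$. Using $a_{h}\neq a_{h+1}$, apply the partial-fraction identity
\[
\frac{t}{t-a}\cdot\frac{t}{t-b}=\frac{a}{a-b}\cdot\frac{t}{t-a}+\frac{b}{b-a}\cdot\frac{t}{t-b}\qquad(a\neq b).
\]
This is checked by expanding $\frac{t}{t-a}=1+\frac{a}{t-a}$ on both sides. The two terms give precisely $\frac{a_{h}}{a_{h}-a_{h+1}}I_{q}(\dots,\widehat{a_{h+1}},\dots)$ and $\frac{a_{h+1}}{a_{h+1}-a_{h}}I_{q}(\dots,\widehat{a_{h}},\dots)$.

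\textbf{Main obstacle.} The argument is elementary. The only real care needed is the bookkeeping in Step 1: checking that the rescaling turns the non-strict inequality at position $h$ into a strict one exactly as claimed, and handling the boundary cases $h=0$ and $h=k$ separately. The standing assumption that no $t$ in the summation range hits a pole ensures all terms are defined and the rearrangement is legitimate, since all sums are finite.
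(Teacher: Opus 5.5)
Your proposal is correct and follows the paper's own argument. You rescale the variables after position $h$ by $q$ so that the difference reduces to chains with $t_h=t_{h+1}$, and then apply the same partial-fraction identity. You also write out the boundary cases $h=0$ and $h=k$ explicitly, where the paper only says they follow from ``similar but simpler arguments.''
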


\begin{proof}
We first consider the case $0<h<k$. Then
\begin{align*}
 & I_{q}(a_{0};a_{1},\dots,a_{h},a_{h+1}q,\dots,a_{k}q;a_{k+1}q)\\
 & =\sum_{a_{0}\trianglelefteqslant t_{1}\trianglelefteqslant\cdots\trianglelefteqslant t_{k}\trianglelefteqslant a_{k+1}q}\prod_{j=1}^{h}\frac{t_{j}}{t_{j}-a_{j}}\prod_{j=h+1}^{k}\frac{t_{j}}{t_{j}-a_{j}q}\\
 & =\sum_{a_{0}\trianglelefteqslant t_{1}\trianglelefteqslant\cdots\trianglelefteqslant t_{h}\vartriangleleft u_{h+1}\trianglelefteqslant\cdots\trianglelefteqslant u_{k}\trianglelefteqslant a_{k+1}}\prod_{j=1}^{h}\frac{t_{j}}{t_{j}-a_{j}}\prod_{j=h+1}^{k}\frac{u_{j}}{u_{j}-a_{j}}\qquad(u_{j}:=t_{j}q^{-1})
\end{align*}
where $x\vartriangleleft y$ means $x\trianglelefteqslant y$ and
$x\neq y$. Thus,
\begin{align*}
 & I_{q}(a_{0};a_{1},\dots,a_{k};a_{k+1})-I_{q}(a_{0};a_{1},\dots,a_{h},a_{h+1}q,\dots,a_{k}q;a_{k+1}q)\\
 & =\sum_{a_{0}\trianglelefteqslant t_{1}\trianglelefteqslant\cdots\trianglelefteqslant t_{h}=t_{h+1}\trianglelefteqslant\cdots\trianglelefteqslant t_{k}\trianglelefteqslant a_{k+1}}\prod_{j=1}^{k}\frac{t_{j}}{t_{j}-a_{j}}\\
 & =\sum_{a_{0}\trianglelefteqslant t_{1}\trianglelefteqslant\cdots\trianglelefteqslant t_{h}=t_{h+1}\trianglelefteqslant\cdots\trianglelefteqslant t_{k}\trianglelefteqslant a_{k+1}}\frac{t_{h}}{t_{h}-a_{h}}\frac{t_{h}}{t_{h}-a_{h+1}}\prod_{j\neq h,h+1}\frac{t_{j}}{t_{j}-a_{j}}\\
 & =\frac{1}{a_{h}-a_{h+1}}\sum_{a_{0}\trianglelefteqslant t_{1}\trianglelefteqslant\cdots\trianglelefteqslant t_{h}=t_{h+1}\trianglelefteqslant\cdots\trianglelefteqslant t_{k}\trianglelefteqslant a_{k+1}}\left(\frac{a_{h}t_{h}}{t_{h}-a_{h}}-\frac{a_{h+1}t_{h}}{t_{h}-a_{h+1}}\right)\prod_{j\neq h,h+1}\frac{t_{j}}{t_{j}-a_{j}}\\
 & =\frac{a_{h}I_{q}(a_{0};a_{1},\dots,\widehat{a_{h+1}},\dots,a_{k};a_{k+1})-a_{h+1}I_{q}(a_{0};a_{1},\dots,\widehat{a_{h}},\dots,a_{k};a_{k+1})}{a_{h}-a_{h+1}}.
\end{align*}
Thus, the case $0<h<k$ is proved. The cases $h=0$ and $h=k$ follow
from similar but simpler arguments.
\end{proof}
\begin{cor}
\label{cor:difference_formula_0}We have
\begin{align*}
 & I_{q}(a_{0};a_{1},\dots,a_{h},0,a_{h+1},\dots,a_{k};a_{k+1})-I_{q}(a_{0};a_{1},\dots,a_{h},0,a_{h+1}q,\dots,a_{k}q;a_{k+1}q)\\
 & =I_{q}(a_{0};a_{1},\dots,a_{h},a_{h+1},\dots,a_{k};a_{k+1}).
\end{align*}
\end{cor}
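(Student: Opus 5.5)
This is Lemma~\ref{lem:difference_formula} with the sequence of points $(a_{1},\dots,a_{h},0,a_{h+1},\dots,a_{k})$, whose length is $k+1$, and with the splitting index $h+1$. The two neighbours at the split are $0$ (in position $h+1$) and $a_{h+1}$ (in position $h+2$). Since the zero sits before the split, it is not multiplied by $q$; in any case $0\cdot q=0$, so the shifted left-hand side of the lemma is exactly the second term of the corollary. The hypothesis of the lemma requires the two neighbours to be distinct, so I assume $a_{h+1}\neq0$; the case $a_{h+1}=0$ is handled separately below.

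Applying Lemma~\ref{lem:difference_formula} gives two contributions on the right-hand side.

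\emph{The first contribution.} This is the term in which the later letter $a_{h+1}$ is removed. It carries the coefficient $0/(0-a_{h+1})=0$, so it vanishes. If $h=k$ this term is absent anyway.

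\emph{The second contribution.} This is the term in which the earlier letter $0$ is removed. It carries the coefficient $a_{h+1}/(a_{h+1}-0)=1$, so it equals exactly $I_{q}(a_{0};a_{1},\dots,a_{h},a_{h+1},\dots,a_{k};a_{k+1})$.

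The indices need care at the boundary $h=0$. There the zero is the first letter, so the lemma is being used with index $1>0$, and the second contribution is present as required.

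The boundary $h=k$ needs a different treatment, because the lemma's degenerate case would wrongly give only $0$. Here we have a trailing $0$ and the splitting index $k+1$ equals the new length, so we are in the lemma's case ``$h=k$''. In that case the lemma's first contribution is $0$ and its second contribution, from the letter just before the end, is the one we want. More precisely, with $h=k$ the pair at the split is the last letter $0$ together with the endpoint. In the proof of the lemma, this case reduces to the terms with $t_{k+1}=a_{k+1}$, weighted by the factor $t/(t-0)=1$. These terms give exactly $I_{q}(a_{0};a_{1},\dots,a_{k};a_{k+1})$.

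Where the bookkeeping in the lemma's edge cases is unclear, I would instead redo the direct computation from the proof of Lemma~\ref{lem:difference_formula}. Substitute $u_{j}=t_{j}q^{-1}$ for the shifted variables. The difference is then the sum over configurations in which the variable attached to $0$ coincides with the next variable, or with the endpoint $a_{k+1}$ when $h=k$. The factor $t/(t-0)=1$ simply disappears, which yields the claim. This direct argument also covers $a_{h+1}=0$, since it never uses the hypothesis $a_{h}\neq a_{h+1}$.

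The only real obstacle is getting the edge cases $h=0$ and $h=k$ right. The direct computation settles them uniformly.
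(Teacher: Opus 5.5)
Your proposal is correct and follows the paper's intended route: the paper gives no separate proof, since the corollary is just Lemma~\ref{lem:difference_formula} applied to the sequence $(a_{1},\dots,a_{h},0,a_{h+1},\dots,a_{k})$ with splitting index $h+1$, where the first bracket has coefficient $0/(0-a_{h+1})=0$ and the second has coefficient $a_{h+1}/(a_{h+1}-0)=1$ (with the endpoint $a_{k+1}$ as the neighbour when $h=k$). One inaccuracy: the lemma's case ``$h=k$'' does not ``give only $0$''---only its first bracket vanishes, and its second bracket is exactly the term you want, as your own next sentences say. Your direct-computation remark also correctly covers $a_{h+1}=0$, where the lemma's distinctness hypothesis is not available.
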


\section{\label{sec:known_cases}The connection to Yamamoto's duality for
one-variable multiple $q$-polylogarithms}

In \cite{Yam_duality}, Yamamoto proved a duality for a $q$-analogue
of one-variable multiple polylogarithms arising from iterated integrals
on $\mathbb{P}^{1}\setminus\{0,1,\infty,z\}$. As will be explained
later, this result provides a common generalization of the duality
formulas for $q$-MZVs in the Bradley--Zhao and Schlesinger--Zudilin
models \cite{Bradley_qMZVs}\cite{Zhao_sz_duality}.

In this section, we show that Yamamoto's duality can be viewed as
a special case of Conjecture~\ref{conj:main}. We first introduce
the notation following \cite{Yam_duality}.

A pair $\tilde{k}=(k,\mu)\in\mathbb{Z}_{>0}\times\{0,1\}$ is called
an augmented positive integer, and a tuple $\tilde{\boldsymbol{k}}=(\tilde{k}_{1},\dots,\tilde{k}_{r})$
of augmented positive integers is called an augmented index. We say
that $\tilde{\boldsymbol{k}}$ is admissible if either $r=0$ or $\tilde{k}_{r}\neq(1,1)$.
For an augmented index 
\[
\tilde{\boldsymbol{k}}=\bigl((k_{1},\mu_{1}),\dots,(k_{r},\mu_{r})\bigr),
\]
we define 
\[
w(\tilde{\boldsymbol{k}}):=y_{\mu_{1}}x^{k_{1}-1}\cdots y_{\mu_{r}}x^{k_{r}-1}\in\mathbb{Q}\langle x,y_{0},y_{1}\rangle.
\]
Here the letters $x,y_{0},y_{1}$ are related to the differential
forms in \cite{Yam_duality} by 
\[
(x,y_{0},y_{1})=(e_{0},-e_{z},e_{z}-e_{1}).
\]
Let $\tau'$ be the anti-automorphism of $\mathbb{Q}\langle x,y_{0},y_{1}\rangle$
defined by 
\[
\tau'(x):=y_{1},\qquad\tau'(y_{0}):=y_{0},\qquad\tau'(y_{1}):=x.
\]
For an admissible augmented index $\tilde{\boldsymbol{k}}$, its dual
$\tilde{\boldsymbol{k}}^{\dagger}$ is defined by the relation 
\[
\tau'\bigl(w(\tilde{\boldsymbol{k}})\bigr)=w(\tilde{\boldsymbol{k}}^{\dagger}).
\]
For an augmented index 
\[
\tilde{\boldsymbol{k}}=\bigl((k_{1},\mu_{1}),\dots,(k_{r},\mu_{r})\bigr),
\]
we define 
\[
\mathrm{Li}_{q}^{(1)}(\tilde{\boldsymbol{k}};z):=\sum_{0=m_{0}<m_{1}<\cdots<m_{r}}\prod_{i=1}^{r}\frac{q^{(k_{i}-1)m_{i}}\bigl(\mu_{i}+(-1)^{\mu_{i}}q^{m_{i-1}}z^{m_{i}-m_{i-1}}\bigr)}{(1-q^{m_{i}})^{k_{i}}}\in\mathbb{Q}[[z,q]].
\]
Yamamoto proved the following duality theorem.
\begin{thm}[Yamamoto \cite{Yam_duality}]
\label{thm:Yam_duality}For an admissible augmented index $\tilde{\boldsymbol{k}}$,
we have $\mathrm{Li}_{q}^{(1)}(\tilde{\boldsymbol{k}};z)=\mathrm{Li}_{q}^{(1)}(\tilde{\boldsymbol{k}}^{\dagger};z)$.
\end{thm}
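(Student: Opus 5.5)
Since Theorem~\ref{thm:Yam_duality} is quoted from \cite{Yam_duality}, the paper needs only the citation. If I had to reprove it, I would use the \emph{connected sum} method of Seki--Yamamoto, which is well suited to dualities of this kind. The idea is to interpolate between $\mathrm{Li}_{q}^{(1)}(\tilde{\boldsymbol{k}};z)$ and $\mathrm{Li}_{q}^{(1)}(\tilde{\boldsymbol{k}}^{\dagger};z)$ by a two-sided sum.

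Concretely, write $w(\tilde{\boldsymbol{k}})=w'w''$ as a concatenation of two words in $x,y_{0},y_{1}$. Define a series $Z(w';\tau'(w''))$ as a double nested sum. The first block is a truncated version of the summand of $\mathrm{Li}_{q}^{(1)}$ for $w'$, with summation indices $0=m_{0}<m_{1}<\cdots<m_{a}=m$. The second block is the analogous summand for $\tau'(w'')$, with indices $0=n_{0}<\cdots<n_{b}=n$. The two blocks are joined by a connector $C(m,n)$, a $q$-binomial type factor such as $\frac{(q)_{m}(q)_{n}}{(q)_{m+n}}$, suitably twisted by powers of $z$ so that the letters $y_{0},y_{1}$ (which involve $z$) are accommodated. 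The connector must satisfy $C(m,0)=C(0,n)=1$. These normalisations guarantee that the extreme cases $w''=\varnothing$ and $w'=\varnothing$ reproduce $\mathrm{Li}_{q}^{(1)}(\tilde{\boldsymbol{k}};z)$ and $\mathrm{Li}_{q}^{(1)}(\tilde{\boldsymbol{k}}^{\dagger};z)$ respectively. Admissibility ($\tilde{k}_{r}\neq(1,1)$) is what makes all these series converge in $\mathbb{Q}[[z,q]]$.

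The key step is a family of \emph{transport relations}. For each letter $\ell\in\{x,y_{0},y_{1}\}$ one needs
\[
Z(w'\ell;\,v)=Z(w';\,\tau'(\ell)\,v).
\]
This lets the letters be moved one at a time across the connector, until the whole word has been transported and the identity $\mathrm{Li}_{q}^{(1)}(\tilde{\boldsymbol{k}};z)=\mathrm{Li}_{q}^{(1)}(\tilde{\boldsymbol{k}}^{\dagger};z)$ follows by chaining. Each transport relation reduces to a one-step telescoping identity for the connector: a first-order difference equation in $m$ (respectively $n$), of the form
\[
C(m,n)-C(m-1,n)=(\text{weight of }\ell\text{ at }m)\cdot(\text{weight of }\tau'(\ell)\text{ at }n)\cdot C(m,n-1),
\]
summed over one free index. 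The three cases are handled as follows. The case $\ell=x$, which is paired with $\tau'(x)=y_{1}$, behaves like the $q$-MZV duality and is handled by $(1-q^{m})$-type telescoping. The case $\ell=y_{0}$ is self-dual; there the factor $-q^{m_{i-1}}z^{m_{i}-m_{i-1}}$ must be matched with itself, and this is where the $z$-twist in the connector is needed.

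The main obstacle is choosing the connector. It must simultaneously satisfy the telescoping identities for $x\leftrightarrow y_{1}$ and for $y_{0}\leftrightarrow y_{0}$, given the awkward shape $\mu_{i}+(-1)^{\mu_{i}}q^{m_{i-1}}z^{m_{i}-m_{i-1}}$ of the summand. I would first try the ansatz
\[
C(m,n)=\frac{(q)_{m}(q)_{n}}{(q)_{m+n}}\,z^{\alpha(m,n)}q^{\beta(m,n)},
\]
with $\alpha,\beta$ bilinear, and solve the difference equations for the exponents.

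If this ansatz fails, the fallback is the route suggested by this paper itself. One expresses $\mathrm{Li}_{q}^{(1)}$ as an iterated $q$-integral $I_{q}(0;\dots;1)$ with $q$-shifted letters, using Theorem~\ref{thm:series_expression}. One then applies the lattice involution $t\mapsto c/t$, which reverses $\trianglelefteq$. Lemma~\ref{lem:difference_formula} and Corollary~\ref{cor:difference_formula_0} are used to correct the resulting endpoint and shift discrepancies.
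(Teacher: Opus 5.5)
Your opening remark is exactly what the paper does: Theorem~\ref{thm:Yam_duality} is quoted from \cite{Yam_duality}, and the paper gives no argument of its own. The self-contained reproof you then sketch, however, is not a proof, and both of its routes have a genuine gap.

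In a connected-sum argument essentially all of the content lies in exhibiting the connector and verifying the transport relations. You do neither. $C(m,n)$ is left as an ansatz with undetermined exponents $\alpha,\beta$, and none of the relations $Z(w'\ell;v)=Z(w';\tau'(\ell)v)$ is checked. The sketch also misplaces the difficulty. For generic $z$ it is not only $y_{0}$ that involves $z$: the weight attached to $y_{1}$ is $1-q^{m_{i-1}}z^{m_{i}-m_{i-1}}$. So the pair $x\leftrightarrow y_{1}$ reduces to a $q$-MZV duality only at special values such as $z=0$ (Bradley--Zhao) and $z=q$ (Schlesinger--Zudilin). Moreover, both $z$-dependent weights depend on two consecutive summation indices. (Also, for $\mu_{i}=0$ the factor is $+q^{m_{i-1}}z^{m_{i}-m_{i-1}}$, not $-q^{m_{i-1}}z^{m_{i}-m_{i-1}}$.) Hence your displayed telescoping identity, with ``the weight of $\ell$ at $m$'' times ``the weight of $\tau'(\ell)$ at $n$'', does not make sense as written for these letters. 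How a connector should absorb this coupling is exactly the missing idea. The proposal offers only the hope that a bilinear exponent ansatz will work.

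The fallback is not available either. The paper does not derive Theorem~\ref{thm:Yam_duality} from its $q$-integral machinery. Section~\ref{sec:known_cases} goes the other way: via Proposition~\ref{prop:Lq_Yam}, the theorem is the specialization $A=0$, $B=\infty$, $C=z^{-1}q^{-k}$, $D=1$ of Conjecture~\ref{conj:main}. It is thus evidence for the conjecture, not a consequence of anything proved in the paper.

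The only reversal tool, Lemma~\ref{lem:inv_trans}, uses $t\mapsto xy/t$ on a finite lattice $x\trianglelefteq t\trianglelefteq y$ with $x\neq0$. It yields the duality only under $AD=BCq^{m(w)}$, which cannot hold at $A=0$, $B=\infty$. For $I_{q}(0;\dots;1)$ the summation set $\{q^{n}\mid n\geq0\}$ is not stable under any map $t\mapsto c/t$. Where the paper does invert (the case $B=C=\infty$), this merely turns the duality into the $q\mapsto q^{-1}$ symmetry of Conjecture~\ref{conj:B_C_inf}, which is no easier. Lemma~\ref{lem:difference_formula} and Corollary~\ref{cor:difference_formula_0} only shift parameters and endpoints, so they cannot supply the reversal.

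As it stands, the only valid part of your proposal is the citation itself.
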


\begin{rem}
This theorem can be regarded as a common generalization of the duality
formulas for $q$-MZVs in the Bradley--Zhao and Schlesinger--Zudilin
models. Let 
\[
\tilde{\boldsymbol{k}}=\bigl((k_{1},\mu_{1}),\dots,(k_{r},\mu_{r})\bigr)
\]
be an admissible augmented index such that $\mu_{1}=\cdots=\mu_{r}=1$.
Then its dual $\tilde{\boldsymbol{k}}^{\dagger}$ also has all $\mu$-components
equal to $1$. In this case, the specializations at $z=0$ and $z=q$
coincide with the $q$-MZVs in the Bradley--Zhao and Schlesinger--Zudilin
models, respectively: 
\[
\mathrm{Li}_{q}^{(1)}(\tilde{\boldsymbol{k}};0)=\zeta_{q}^{\mathrm{BZ}}(k_{1},\dots,k_{r}):=\sum_{0<m_{1}<\cdots<m_{r}}\prod_{i=1}^{r}\frac{q^{(k_{i}-1)m_{i}}}{(1-q^{m_{i}})^{k_{i}}},
\]
\[
\mathrm{Li}_{q}^{(1)}(\tilde{\boldsymbol{k}};q)=\zeta_{q}^{\mathrm{SZ}}(k_{1}-1,\dots,k_{r}-1):=\sum_{0<m_{1}<\cdots<m_{r}}\prod_{i=1}^{r}\left(\frac{q^{m_{i}}}{1-q^{m_{i}}}\right)^{k_{i}-1}.
\]
It follows that Theorem~\ref{thm:Yam_duality} generalizes both the
duality formulas for $q$-MZVs in the Bradley--Zhao model \cite[Corollary~3]{Bradley_qMZVs}
and Schlesinger--Zudilin model \cite[Theorem 8.3]{Zhao_sz_duality},
namely, 
\[
\zeta_{q}^{\mathrm{BZ}}(\{1\}^{l_{1}-1},k_{1}+1,\dots,\{1\}^{l_{r}-1},k_{r}+1)=\zeta_{q}^{\mathrm{BZ}}(\{1\}^{k_{r}-1},l_{r}+1,\dots,\{1\}^{k_{1}-1},l_{1}+1)\qquad(k_{j},l_{j}\geq1)
\]
\[
\zeta_{q}^{\mathrm{SZ}}(\{0\}^{l_{1}-1},k_{1},\dots,\{0\}^{l_{r}-1},k_{r})=\zeta_{q}^{\mathrm{SZ}}(\{0\}^{k_{r}-1},l_{r},\dots,\{0\}^{k_{1}-1},l_{1})\qquad(k_{j},l_{j}\geq1).
\]
\end{rem}

We extend the definition of $L_{q}$ to $\mathbb{Q}W^{0}$ by $\mathbb{Q}$-linearity.
Let $\theta:\mathbb{Q}\langle x,y_{0},y_{1}\rangle\to\mathbb{Q}W$
be an algebra homomorphism defined by $\theta(x):=e_{AC}-e_{BC}$,
$\theta(y_{0}):=e_{BC}$, $\theta(y_{1}):=e_{BD}-e_{BC}$. Then, by
direct calculation, we have the following:
\begin{prop}
\label{prop:Lq_Yam}For an augmented index $\tilde{\boldsymbol{k}}=\bigl((k_{1},\mu_{1}),\dots,(k_{r},\mu_{r})\bigr)$,
we have
\[
L_{q}(\theta(w(\tilde{\boldsymbol{k}})))_{A=0,B=\infty,C=z^{-1}q^{-k},D=1}=\mathrm{Li}_{q}^{(1)}(\tilde{\boldsymbol{k}};z)
\]
where $k=k_{1}+\cdots+k_{r}$.
\end{prop}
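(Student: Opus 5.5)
The plan is to compute both sides as explicit power series in $z$ and $q$ and to compare them block by block, where a block is one factor $y_{\mu_i}x^{k_i-1}$ of $w(\tilde{\boldsymbol{k}})$. I first expand $\theta(w(\tilde{\boldsymbol{k}}))$ by multilinearity into a signed sum of words in the letters $e_{AC},e_{BC},e_{BD}$. For each such word I then specialize $A=0$, $B=\infty$, $D=1$.

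\emph{Step 1: the specialized one-forms.} For a single word, note that no letter is $e_{AB}$ or $e_{AD}$. Hence
\[
C^{(j)}=z^{-1}q^{-k+j+b_j},\qquad D^{(j)}=q^{-d_j},
\]
where $b_j$ counts the letters $e_{BD}$ in positions $h\geq j$, and $d_j$ counts the letters $e_{AC},e_{BC}$ in positions $h\geq j$. With $A=0$ and $B=\infty$, the letters give the following factors:
\begin{itemize}
\item $e_{AC}$ gives $1-\frac{t}{t-C^{(j)}}$;
\item $e_{BC}$ gives $-\frac{t}{t-C^{(j)}}$;
\item $e_{BD}$ gives $-\frac{t}{t-D^{(j)}}$.
\end{itemize}
The summation range is $t_j=q^{n_j}$ with $n_1\geq\cdots\geq n_k\geq0$, because $A=q^{N}D$ with $N\to\infty$, as in the definition of $I_q(0;\dots;1)$.

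\emph{Step 2: the easy part.} Swapping $e_{AC}$ and $e_{BC}$ at one position changes none of the exponents $b_i$ or $d_i$. So $\theta(x)=e_{AC}-e_{BC}$ specializes to the constant factor $1$ at its position, and $\theta(y_0)$ gives $-t/(t-C^{(j)})$. The difficulty is $\theta(y_1)=e_{BD}-e_{BC}$. Choosing $e_{BD}$ rather than $e_{BC}$ at a position $j$ raises $b_i$ by one for all $i\leq j$, and lowers $d_i$ by one for all $i\leq j$. The $2^{\#y_1}$ expanded words therefore carry different shifts, and the integrand does not factor letter by letter.

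\emph{Step 3: handling the $y_1$ positions.} I would process the $y_1$ positions from right to left, that is, by induction on the number of $y_1$'s.
\begin{itemize}
\item The rightmost $y_1$ at position $j$ shifts only positions $\leq j$, and all of these are earlier in the iterated sum.
\item Substituting $t_i\mapsto t_iq^{\pm1}$ for $i\leq j$ re-expresses the shifted word as the unshifted one with a shifted range. This is exactly the mechanism of Lemma~\ref{lem:difference_formula} and Corollary~\ref{cor:difference_formula_0}.
\item The $x$-positions, which carry weight $0$ after specialization, play the role of the inserted $0$ in Corollary~\ref{cor:difference_formula_0}.
\item I expect the combination $e_{BD}-e_{BC}$ to collapse to a single factor of the form $\frac{t}{t-C^{(j)}}-\frac{t}{t-D^{(j)}}$ with consistent shifts, up to boundary terms that vanish because of the factor $1$ at the adjacent $x$-positions.
\end{itemize}
This bookkeeping step is the main obstacle. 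As a fallback I would verify it directly on generating series. The check would be that, after summing over the choices at each $y_1$, the $q$-powers produced by the differing $b_i$ and $d_i$ reorganize into the numerator $\mu_i+(-1)^{\mu_i}q^{m_{i-1}}z^{m_i-m_{i-1}}$.

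\emph{Step 4: summing over each block.} Once each position carries a factor, I expand
\[
-\frac{t}{t-c}=\sum_{m\geq1}(t/c)^m,\qquad c=z^{-1}q^{-k+j+b_j},
\]
so that each block contributes $z^{m}q^{(k-j-b_j)m}t^{m}$. For a block $y_{\mu_i}x^{k_i-1}$, summing $t=q^{n}$ over the $k_i$ ordered positions inside the block with these geometric weights produces $q^{(k_i-1)m_i}/(1-q^{m_i})^{k_i}$. Here $m_i$ is the accumulated exponent from all blocks to the right, and the telescoping variables $m_0<m_1<\cdots<m_r$ are these accumulated exponents. Collecting everything gives $\mathrm{Li}_q^{(1)}(\tilde{\boldsymbol{k}};z)$. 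The normalization $C=z^{-1}q^{-k}$ is exactly what cancels the global $q^{-k}$ shift coming from $C^{(j)}$.
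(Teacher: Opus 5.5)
There is a genuine gap, and it is exactly at the step you flag as the main obstacle: the collapse you expect in Step~3 does not occur. Corollary~\ref{cor:difference_formula_0} does not say the boundary term vanishes. It says the unshifted minus the shifted integral equals the integral with that $0$ deleted, which is a nonzero term of lower weight. Also, when some $k_i=1$, the $y$-positions of consecutive blocks are adjacent, with no $x$-position between them.

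More to the point, the cumulative shifts are not an artifact to be removed. In the target, the two choices $e_{BD}$ and $e_{BC}$ at the head of the $i$-th block (with $\mu_i=1$) contribute $1$ and $-q^{m_{i-1}}z^{m_i-m_{i-1}}$. In a series of the form $\prod_j a_j^{m_{j-1}-m_j}$, the factor $q^{m_{i-1}}=\prod_{j<i}(q^{-1})^{m_{j-1}-m_j}$ arises precisely from multiplying every $a_j$ with $j<i$ by $q^{-1}$. That is the very shift you are trying to eliminate. Your fallback of ``verifying on generating series'' restates the conclusion rather than supplying a mechanism. Step~4 presupposes Step~3 (``once each position carries a factor''), treats only the $C$-type factor, and accumulates in the wrong direction: $m_i$ collects the exponents of blocks $1,\dots,i$, not those to the right.

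The missing idea is to keep the expanded words separate and let the series expansion absorb the shifts. Let $\epsilon_i\in\{0,\dots,\mu_i\}$ record the choice at the head of block $i$, with $\epsilon_i=1$ for $e_{BD}$. Each word then contributes
$(-1)^{\#\{i:\mu_i=0\}+\sum_i\epsilon_i}\,I_q(0;a_1^{\boldsymbol{\epsilon}},\{0\}^{k_1-1},\dots,a_r^{\boldsymbol{\epsilon}},\{0\}^{k_r-1};1)$,
where $a_i^{\boldsymbol{\epsilon}}=b_i^{\boldsymbol{\epsilon}}(z^{-1}q)^{1-\epsilon_i}$ and $b_i^{\boldsymbol{\epsilon}}=q^{-\sum_{j\ge i}(k_j-1+\delta_{\epsilon_j,0})}$.
\begin{itemize}
\item Theorem~\ref{thm:series_expression} turns this into a sum over $0=m_0<m_1<\cdots<m_r$ of $\prod_i(a_i^{\boldsymbol{\epsilon}})^{m_{i-1}-m_i}/(1-q^{m_i})^{k_i}$.
\item Summation by parts gives $\prod_i(b_i^{\boldsymbol{\epsilon}})^{m_{i-1}-m_i}=\prod_i(b_{i+1}^{\boldsymbol{\epsilon}}/b_i^{\boldsymbol{\epsilon}})^{m_i}$ with $b_{r+1}^{\boldsymbol{\epsilon}}=1$.
\item The ratio $b_{i+1}^{\boldsymbol{\epsilon}}/b_i^{\boldsymbol{\epsilon}}=q^{k_i-1+\delta_{\epsilon_i,0}}$ depends on $\epsilon_i$ alone. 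So for fixed $m$ the summand is a product of block-local factors, and the sum over $\boldsymbol{\epsilon}$ factors block by block.
\item Block $i$ gives $q^{(k_i-1)m_i}$ for $\epsilon_i=1$ and $q^{(k_i-1)m_i}q^{m_{i-1}}z^{m_i-m_{i-1}}$ for $\epsilon_i=0$. With the sign $(-1)^{\mu_i-\epsilon_i}$ these combine to $q^{(k_i-1)m_i}\bigl(\mu_i+(-1)^{\mu_i}q^{m_{i-1}}z^{m_i-m_{i-1}}\bigr)$.
\end{itemize}
This telescoping of the cumulative shifts is the whole content of the proposition, and it is what your proposal is missing.
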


\begin{proof}
Write 
\[
w(\tilde{\boldsymbol{k}})=u_{1}\cdots u_{k}\in\mathbb{Q}\langle x,y_{0},y_{1}\rangle
\]
and put
\[
J:=\{j\in\{1,\dots,k\}\,\mid\,u_{j}\neq x\}.
\]
Then,
\begin{align*}
 & L_{q}(\theta(w(\tilde{\boldsymbol{k}})))_{A=0,B=\infty,C=z^{-1}q^{-k},D=1}\\
 & =(-1)^{\#\{i\,\mid\,u_{i}=y_{0}\}}\sum_{\substack{(\epsilon_{j})_{j\in J}\\
0\leq\epsilon_{j}\leq0\,(u_{j}=y_{0})\\
0\leq\epsilon_{j}\leq1\,(u_{j}=y_{1})
}
}\sum_{t_{1}\trianglelefteq\cdots\trianglelefteq t_{k}\trianglelefteq1}\prod_{j=1}^{k}\begin{cases}
1 & u_{j}=x\\
\frac{t_{j}}{t_{j}-C^{(j)}} & u_{j}\neq x,\epsilon_{j}=0\\
\frac{-t_{j}}{t_{j}-D^{(j)}} & u_{j}\neq x,\epsilon_{j}=1
\end{cases}
\end{align*}
where
\begin{align*}
C^{(j)} & :=z^{-1}q^{-k+j+\#\{h\geq j\,\mid\,u_{h}=y_{1},\epsilon_{h}=1\}},\\
D^{(j)} & :=q^{-1-k+j+\#\{h\geq j\,\mid\,u_{h}=y_{1},\epsilon_{h}=1\}}.
\end{align*}
Thus, we have
\begin{align*}
 & L_{q}(\theta(w(\tilde{\boldsymbol{k}})))_{A=0,B=\infty,C=z^{-1}q^{-k},D=1}\\
 & =(-1)^{\#\{i\,\mid\mu_{i}=0\}}\sum_{\substack{\boldsymbol{\epsilon}=(\epsilon_{1},\dots,\epsilon_{r})\in\{0,1\}^{r}\\
0\leq\epsilon_{i}\leq\mu_{i}
}
}(-1)^{\epsilon_{1}+\cdots+\epsilon_{r}}I_{q}(0;a_{1}^{\boldsymbol{\epsilon}},\{0\}^{k_{1}-1},\dots,a_{r}^{\boldsymbol{\epsilon}},\{0\}^{k_{r}-1};1)
\end{align*}
where $a_{i}^{\boldsymbol{\epsilon}}:=b_{i}^{\boldsymbol{\epsilon}}(z^{-1}q)^{1-\epsilon_{i}}$
with
\[
b_{i}^{\boldsymbol{\epsilon}}:=q^{-\sum_{j=i}^{r}(k_{j}-1+\delta_{\epsilon_{j},0})}.
\]
We also put $b_{r+1}^{\boldsymbol{\epsilon}}:=1$. Then, by Theorem
\ref{thm:series_expression}, 
\begin{align*}
 & L_{q}(\theta(w(\tilde{\boldsymbol{k}})))_{A=0,B=\infty,C=z^{-1}q^{-k},D=1}\\
 & =\sum_{0=m_{0}<m_{1}<\cdots<m_{r}}\sum_{\substack{\boldsymbol{\epsilon}=(\epsilon_{1},\dots,\epsilon_{r})\in\{0,1\}^{r}\\
0\leq\epsilon_{i}\leq\mu_{i}
}
}(-1)^{(\mu_{1}-\epsilon_{1})+\cdots+(\mu_{r}-\epsilon_{r})}\prod_{i=1}^{r}\frac{(a_{i}^{\boldsymbol{\epsilon}})^{m_{i-1}-m_{i}}}{(1-q^{m_{i}})^{k_{i}}}\\
 & =\sum_{0=m_{0}<m_{1}<\cdots<m_{r}}\sum_{\substack{\boldsymbol{\epsilon}=(\epsilon_{1},\dots,\epsilon_{r})\in\{0,1\}^{r}\\
0\leq\epsilon_{i}\leq\mu_{i}
}
}(-1)^{(\mu_{1}-\epsilon_{1})+\cdots+(\mu_{r}-\epsilon_{r})}\prod_{i=1}^{r}\frac{(b_{i+1}^{\boldsymbol{\epsilon}}/b_{i}^{\boldsymbol{\epsilon}})^{m_{i}}}{(1-q^{m_{i}})^{k_{i}}}\prod_{\substack{1\leq i\leq r\\
\epsilon_{i}=0
}
}(zq^{-1})^{m_{i}-m_{i-1}}\\
 & =\sum_{0=m_{0}<m_{1}<\cdots<m_{r}}\sum_{\substack{\boldsymbol{\epsilon}=(\epsilon_{1},\dots,\epsilon_{r})\in\{0,1\}^{r}\\
0\leq\epsilon_{i}\leq\mu_{i}
}
}\prod_{i=1}^{r}\frac{q^{(k_{i}-1)m_{i}}}{(1-q^{m_{i}})^{k_{i}}}\prod_{\substack{1\leq i\leq r\\
\epsilon_{i}=0
}
}(-1)^{\mu_{i}}q^{m_{i-1}}z^{m_{i}-m_{i-1}}\\
 & =\sum_{0=m_{0}<m_{1}<\cdots<m_{r}}\prod_{i=1}^{r}\frac{q^{(k_{i}-1)m_{i}}\left(\mu_{i}+(-1)^{\mu_{i}}q^{m_{i-1}}z^{m_{i}-m_{i-1}}\right)}{(1-q^{m_{i}})^{k_{i}}}\\
 & =\mathrm{Li}_{q}^{(1)}(\tilde{\boldsymbol{k}};z).
\end{align*}
\end{proof}
Let $\tilde{\boldsymbol{k}}$ be an admissible augmented index, and
put 
\[
w:=\theta(w(\tilde{\boldsymbol{k}})).
\]
Since 
\[
\tau(w)=\theta(w(\tilde{\boldsymbol{k}}^{\dagger})),
\]
the specialization of Conjecture~\ref{conj:main} given by 
\[
L_{q}(w)_{A=0,\,B=\infty,\,C=z^{-1}q^{-k},\,D=1}=L_{q}(\tau(w))_{A=0,\,B=\infty,\,C=z^{-1}q^{-k},\,D=1}
\]
can be interpreted, via Proposition~\ref{prop:Lq_Yam}, as the equality
\[
\mathrm{Li}_{q}^{(1)}(\tilde{\boldsymbol{k}};z)=\mathrm{Li}_{q}^{(1)}(\tilde{\boldsymbol{k}}^{\dagger};z),
\]
which is precisely Theorem~\ref{thm:Yam_duality}.

\section{Proofs for some specific cases}

In this section, we prove Conjecture~\ref{conj:main} in several
special cases.

\subsection{The case $AD=BCq^{m(w)}$}

For a word $w=e_{u_{1}v_{1}}\cdots e_{u_{k}v_{k}}$, set 
\[
m(w):=1+\#\{h\mid u_{h}v_{h}\notin\{AD\}\}.
\]
In this subsection, we prove the duality relation 
\[
L_{q}(w)=L_{q}(\tau(w))
\]
under the specialization 
\[
AD=BCq^{m(w)}.
\]

\begin{lem}
\label{lem:inv_trans}We have
\[
I_{q}(x;[u_{1},v_{1}],\dots,[u_{k},v_{k}];y)=I_{q}(x;[\frac{xy}{v_{k}},\frac{xy}{u_{k}}],\dots,[\frac{xy}{v_{1}},\frac{xy}{u_{1}}];y).
\]
\end{lem}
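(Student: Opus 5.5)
This identity is a discrete analogue of the change of variables $t\mapsto xy/t$. The plan is to apply the substitution $t_j\mapsto xy/t_{k+1-j}$ directly to the finite sum defining $I_q$, and then check that the summand transforms term by term.

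First, the map $t\mapsto s:=xy/t$ is an order-reversing bijection of the chain $\{t: x\trianglelefteq t\trianglelefteq y\}$ onto itself. Write $x=yq^{N}$ with $N\geq 0$, so the chain is $\{yq^{n}\mid 0\le n\le N\}$. Then $xy/(yq^{n})=yq^{N-n}$, so the map sends the point with exponent $n$ to the point with exponent $N-n$. Consequently, $x\trianglelefteq t_1\trianglelefteq\cdots\trianglelefteq t_k\trianglelefteq y$ holds if and only if, putting $s_i:=xy/t_{k+1-i}$, we have $x\trianglelefteq s_1\trianglelefteq\cdots\trianglelefteq s_k\trianglelefteq y$. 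The well-definedness hypotheses also transfer: $t$ avoids $u_j,v_j$ exactly when $s=xy/t$ avoids $xy/u_j,\,xy/v_j$. This requires $u_j,v_j\neq 0$. If some $u_j$ is $0$ or $\infty$, we interpret $xy/0=\infty$ and $xy/\infty=0$, use the convention $t/(t-\infty)=0$, and the same computation below goes through.

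Second, we compare a single factor. For $s=xy/t$ and $u\neq 0$,
\[
\frac{s}{s-xy/u}=\frac{xy/t}{xy/t-xy/u}=\frac{u}{u-t}=1-\frac{t}{t-u}.
\]
Hence
\[
\frac{s}{s-xy/v}-\frac{s}{s-xy/u}=\Bigl(1-\frac{t}{t-v}\Bigr)-\Bigl(1-\frac{t}{t-u}\Bigr)=\frac{t}{t-u}-\frac{t}{t-v}.
\]
This is exactly the $j$-th factor of the left-hand side evaluated at $t=t_j$. On the right-hand side, the $i$-th factor, with $i=k+1-j$, carries the pair $[xy/v_j,\,xy/u_j]$ and is evaluated at $s_i=xy/t_j$. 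So the two products agree factor by factor.

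Finally, summing over the bijective reindexing gives the identity. There is no real obstacle here. The only points needing care are the bookkeeping: the reversal of order (index $i=k+1-j$) together with the swap of $u$ and $v$, which absorbs the sign, and the degenerate cases where some parameter is $0$ or $\infty$.
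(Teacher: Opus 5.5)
Your proposal is correct and matches the paper's proof: it makes the same substitution $t'_{k+1-j}=xy/t_j$, which reverses the chain $x\trianglelefteq\cdots\trianglelefteq y$, and uses the same factorwise identity $\frac{s}{s-xy/v}-\frac{s}{s-xy/u}=\frac{t}{t-u}-\frac{t}{t-v}$. Your explicit check that $t\mapsto xy/t$ is an order-reversing bijection of the chain, and your remark on the degenerate parameters $0$ and $\infty$, are details the paper leaves implicit.
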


\begin{proof}
By definition, we have
\[
I_{q}(x;[u_{1},v_{1}],\dots,[u_{k},v_{k}];y):=\sum_{x\trianglelefteq t_{1}\trianglelefteq\cdots\trianglelefteq t_{k}\trianglelefteq y}\prod_{j=1}^{k}\left(\frac{t_{j}}{t_{j}-u_{j}}-\frac{t_{j}}{t_{j}-v_{j}}\right).
\]
Substituting $t_{k+1-j}':=\frac{xy}{t_{j}}$, we obtain
\begin{align*}
 & =\sum_{x\trianglelefteq t_{1}'\trianglelefteq\cdots\trianglelefteq t_{k}'\trianglelefteq y}\prod_{j=1}^{k}\left(\frac{\frac{xy}{t_{j}'}}{\frac{xy}{t_{j}'}-u_{k+1-j}}-\frac{\frac{xy}{t_{j}'}}{\frac{xy}{t_{j}'}-v_{k+1-j}}\right)\\
 & =\sum_{x\trianglelefteq t_{1}'\trianglelefteq\cdots\trianglelefteq t_{k}'\trianglelefteq y}\prod_{j=1}^{k}\left(\frac{t_{j}'}{t_{j}'-\frac{xy}{v_{k+1-j}}}-\frac{t_{j}'}{t_{j}'-\frac{xy}{u_{k+1-j}}}\right)\\
 & =I_{q}(x;[\frac{xy}{v_{k}},\frac{xy}{u_{k}}],\dots,[\frac{xy}{v_{1}},\frac{xy}{u_{1}}];y).
\end{align*}
This proves the lemma.
\end{proof}
\begin{thm}
Let $w:=e_{u_{1}v_{1}}\cdots e_{u_{k}v_{k}}\in W^{0}$, and set
\[
m(w):=1+\#\{h\,\mid\,u_{h}v_{h}\notin\{AD\}\}.
\]
If $AD=BCq^{m(w)}$, then
\[
L_{q}(w)=L_{q}(\tau(w)).
\]
\end{thm}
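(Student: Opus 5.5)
The plan is to derive the statement directly from Lemma~\ref{lem:inv_trans}, applied with $x=A$ and $y=D$, so that $xy=AD$. The substitution $t\mapsto AD/t$ preserves the summation range $A\trianglelefteq t\trianglelefteq D$ and reverses its order. The lemma therefore turns $L_{q}(w)$ into the iterated $q$-integral from $A$ to $D$ whose $j'$-th slot, with $j'=k+1-j$, is
\[
\Bigl[\frac{AD}{v_{j}^{(j)}},\frac{AD}{u_{j}^{(j)}}\Bigr].
\]
Because $\tau(e_{uv})=e_{\sigma(v)\sigma(u)}$, the $j'$-th letter of $\tau(w)$ is $e_{\sigma(v_{j})\sigma(u_{j})}$. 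So it suffices to verify, for each $j$ and each endpoint $P\in\{A,B,C,D\}$ occurring in the $j$-th letter of $w$, that
\[
\frac{AD}{P^{(j)}}=\sigma(P)^{(j')}_{\tau(w)},
\]
where the right-hand side is the shifted parameter computed for the word $\tau(w)$ at position $j'$. The ordered pair is then automatically correct, since $[u,v]\mapsto[AD/v,AD/u]$ matches $e_{uv}\mapsto e_{\sigma(v)\sigma(u)}$.

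The verification is a bookkeeping of the counting exponents. Two translation rules are used throughout. A condition on $h\ge j$ in $w$ becomes a condition on $h'\le j'$ in $\tau(w)$, and vice versa. The letters are matched via $\tau$: $AB\leftrightarrow CD$ and $AC\leftrightarrow BD$, while $AD$ and $BC$ are fixed.

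\emph{The endpoints $D$ and $A$.} We have
\[
AD/D^{(j)}=A\,q^{\#\{h\ge j\mid u_{h}v_{h}\in\{AB,AC,BC\}\}},
\]
and this equals $A^{(j')}$ for $\tau(w)$, since $\{AB,AC,BC\}$ corresponds under $\tau$ to $\{CD,BD,BC\}$. The case $P=A$ is the same identity read backwards.

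\emph{The endpoint $B$ (and symmetrically $C$).} Here the hypothesis $AD=BCq^{m(w)}$ enters. We compute
\[
AD/B^{(j)}=C\,q^{\,m(w)-\#\{h\le j\mid u_hv_h\notin\{AC,AD\}\}-\#\{h\ge j\mid u_hv_h=CD\}}.
\]
The target exponent is that of $C^{(j')}$ for $\tau(w)$. After translation it becomes
\[
\#\{h\ge j\mid u_hv_h\notin\{CD,AD\}\}+\#\{h\le j\mid u_hv_h=AC\}.
\]
Adding the two counts, the required identity reduces to
\[
m(w)=\#\{h\le j\mid u_hv_h\ne AD\}+\#\{h\ge j\mid u_hv_h\ne AD\}=\#\{h\mid u_hv_h\ne AD\}+[u_jv_j\ne AD].
\]
The endpoint $B$ occurs only in the letters $AB$, $BC$, $BD$, none of which is $AD$. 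The indicator is therefore $1$, and this is exactly $m(w)=1+\#\{h\mid u_hv_h\neq AD\}$. The argument for $C$ is identical with $B$ and $C$ interchanged.

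The main obstacle is not this combinatorics but justifying the specialization. We must check that under $A=q^{N}D$ and $AD=BCq^{m(w)}$ (for instance, solving for $C$ in terms of $A,B,D$), the shifted parameters $u_j^{(j)},v_j^{(j)}$ never collide with points $t$ of the range $A\trianglelefteq t\trianglelefteq D$, so that both sides are well defined. Since $B$ and $C$ remain generic up to this single multiplicative relation, I expect this to be routine. With that settled, Lemma~\ref{lem:inv_trans} gives $L_q(w)=L_q(\tau(w))$ termwise.
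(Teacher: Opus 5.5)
Your proposal is correct and follows the paper's proof. The paper applies Lemma~\ref{lem:inv_trans} to $L_q(\tau(w))$ rather than to $L_q(w)$, and simply asserts the parameter identities $AD/\tau(u_j)^{[j]}=u_j^{(j)}$ and $AD/\tau(v_j)^{[j]}=v_j^{(j)}$; your exponent bookkeeping, in which the hypothesis enters only at the $B$ and $C$ endpoints through $[u_jv_j\neq AD]=1$, verifies these identities explicitly (and the well-definedness point you flag is indeed routine, since admissibility makes the $A$- and $D$-shift exponents at least $1$ while $B$ and $C$ stay generic).
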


\begin{proof}
By definition, the left-hand side is equal to 
\[
L_{q}(w)=I_{q}(A;[u_{1}^{(1)},v_{1}^{(1)}],\dots,[u_{k}^{(k)},v_{k}^{(k)}];D)
\]
where
\begin{align*}
A^{(j)} & :=Aq^{\#\{h\leq j\,\mid\,u_{h}v_{h}\in\{BC,BD,CD\}\}}\\
B^{(j)} & :=Bq^{\#\{h\leq j\,\mid\,u_{h}v_{h}\notin\{AC,AD\}\}+\#\{h\geq j\,\mid\,u_{h}v_{h}=CD\}}\\
C^{(j)} & :=Cq^{\#\{h\leq j\,\mid\,u_{h}v_{h}\notin\{AB,AD\}\}+\#\{h\geq j\,\mid\,u_{h}v_{h}=BD\}}\\
D^{(j)} & :=Dq^{-\#\{h\geq j\,\mid\,u_{h}v_{h}\in\{AB,AC,BC\}\}},
\end{align*}
and the right-hand side is equal to
\begin{equation}
L_{q}(\tau(w))=I_{q}(A;[\tau(v_{k})^{[k]},\tau(u_{k})^{[k]}],\dots,[\tau(v_{1})^{[1]},\tau(u_{1})^{[1]}];D)\label{eq:L_tau}
\end{equation}
where
\begin{align*}
A^{[j]} & :=Aq^{\#\{h\geq j\,\mid\,u_{h}v_{h}\in\{AB,AC,BC\}\}}\\
B^{[j]} & :=Bq^{\#\{h\geq j\,\mid\,u_{h}v_{h}\notin\{AD,BD\}\}+\#\{h\leq j\,\mid\,u_{h}v_{h}=AB\}}\\
C^{[j]} & :=Cq^{\#\{h\geq j\,\mid\,u_{h}v_{h}\notin\{AD,CD\}\}+\#\{h\leq j\,\mid\,u_{h}v_{h}=AC\}}\\
D^{[j]} & :=Dq^{-\#\{h\leq j\,\mid\,u_{h}v_{h}\in\{BC,BD,CD\}\}}.
\end{align*}
By applying Lemma~\ref{lem:inv_trans} to (\ref{eq:L_tau}), we have
\[
L_{q}(\tau(w))=I_{q}(A;[\frac{AD}{\tau(u_{1})^{[1]}},\frac{AD}{\tau(v_{1})^{[1]}}],\dots,[\frac{AD}{\tau(u_{k})^{[k]}},\frac{AD}{\tau(v_{k})^{[k]}}];D).
\]
Since
\[
\frac{AD}{\tau(u_{j})^{[j]}}=u_{j}^{(j)},\qquad\frac{AD}{\tau(v_{j})^{[j]}}=v_{j}^{(j)},
\]
the desired identity follows.
\end{proof}

\subsection{The case where $B=C=\infty$ and $w=e_{BD}^{k}e_{AB}^{l}$}

In this subsection, we prove the duality conjecture for $w=e_{BD}^{k}e_{AB}^{l}$
with $k,l\geq1$ under the specialization $B=C=\infty$. We first
reformulate the conjecture for a general word $w$ under this specialization
in an equivalent form. Assume $B=C=\infty$. By applying the change
of variables $t_{j}\mapsto AD/t_{k+1-j}$ to the variables of integration,
we obtain, for $w=u_{1}\cdots u_{k}\in W^{0}$, 
\[
L_{q}(\tau(w))_{B=C=\infty}=(-1)^{k}L_{q^{-1}}(w)_{B=C=\infty}.
\]
Thus the conjectural identity can be formulated in the form $L_{q}(w)_{B=C=\infty}=(-1)^{k}L_{q^{-1}}(w)_{B=C=\infty}$.
Moreover, it is no longer necessary to distinguish between $e_{AB}$
and $e_{AC}$, nor between $e_{BD}$ and $e_{CD}$; furthermore, if
a word contains $e_{BC}$, then its value under $L_{q}$ is zero.
Therefore, if we identify $e_{AB}$ and $e_{AC}$ with $x$, identify
$e_{BD}$ and $e_{CD}$ with $y$, and identify $e_{AD}$ with $z$,
the conjecture can be stated as follows.
\begin{conjecture}
\label{conj:B_C_inf}Let $\mathfrak{h}:=\mathbb{Q}\langle x,y,z\rangle$
and set $\mathfrak{h}^{0}:=\mathbb{Q}\oplus y\mathfrak{h}x$. Fix
$N\geq0$. Define
\[
Z_{N,q}:\mathfrak{h}^{0}\to\mathbb{Q}(q)
\]
by
\begin{align*}
Z_{N,q}(u_{1}\cdots u_{k}) & :=\sum_{0\leq n_{1}\leq\cdots\leq n_{k}\leq N}\prod_{\substack{1\leq j\leq k\\
u_{j}=x
}
}\frac{1}{1-q^{n_{j}+\#\{h\leq j\,\mid\,u_{h}=y\}}}\prod_{\substack{1\leq j\leq k\\
u_{j}=y
}
}\frac{-1}{1-q^{n_{j}-N-\#\{h\geq j\,\mid\,u_{h}=x\}}}\\
 & \qquad\qquad\times\prod_{\substack{1\leq j\leq k\\
u_{j}=z
}
}\left(\frac{1}{1-q^{n_{j}+\#\{h\leq j\,\mid\,u_{h}=y\}}}-\frac{1}{1-q^{n_{j}-N-\#\{h\geq j\,\mid\,u_{h}=x\}}}\right).
\end{align*}
Then, for any word $w=u_{1}\cdots u_{k}\in\mathfrak{h}^{0}$, 
\[
Z_{N,q^{-1}}(w)=(-1)^{k}Z_{N,q}(w).
\]
\end{conjecture}

In this subsection, we prove the following result.
\begin{thm}
Conjecture~\ref{conj:B_C_inf} holds for $w=y^{k}x^{l}$ with $k,l\geq1$.
\end{thm}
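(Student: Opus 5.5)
We first make both sides of the claimed identity explicit for $w=y^{k}x^{l}$. The first $k$ letters are $y$, and for each of them $\#\{h\geq j\mid u_{h}=x\}=l$. The last $l$ letters are $x$, and for each of them $\#\{h\leq j\mid u_{h}=y\}=k$. Put $m_{j}:=N+l-n_{j}$ for $1\leq j\leq k$ and $p_{i}:=n_{k+i}+k$ for $1\leq i\leq l$. Using $-1/(1-q^{-m})=q^{m}/(1-q^{m})$, we get
\[
Z_{N,q}(y^{k}x^{l})=\sum_{0\leq n_{1}\leq\cdots\leq n_{k+l}\leq N}\prod_{j=1}^{k}\frac{q^{m_{j}}}{1-q^{m_{j}}}\prod_{i=1}^{l}\frac{1}{1-q^{p_{i}}}.
\]
Replacing $q$ by $q^{-1}$ and using $1/(1-q^{-p})=-q^{p}/(1-q^{p})$ and $q^{-m}/(1-q^{-m})=-1/(1-q^{m})$, we get
\[
Z_{N,q^{-1}}(y^{k}x^{l})=(-1)^{k+l}\sum_{0\leq n_{1}\leq\cdots\leq n_{k+l}\leq N}\prod_{j=1}^{k}\frac{1}{1-q^{m_{j}}}\prod_{i=1}^{l}\frac{q^{p_{i}}}{1-q^{p_{i}}}.
\]
So the theorem is equivalent to an identity $S_{k,l}(N)=T_{k,l}(N)$ between these two sums. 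The two summands differ only in which block carries the numerators $q^{m_{j}}$, respectively $q^{p_{i}}$.

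To prove $S=T$, we generalize the offsets. For integers $a,b$, let $S_{k,l}^{a,b}(N)$ and $T_{k,l}^{a,b}(N)$ be the same sums with $m_{j}=N+a-n_{j}$ and $p_{i}=n_{k+i}+b$. We then run an induction on $N+k+l$, splitting each sum according to whether $n_{1}=0$ or $n_{1}\geq1$. This mimics Corollary~\ref{cor:difference_formula_0}.
\begin{itemize}
\item On the part $n_{1}\geq1$, the substitution $n\mapsto n-1$, $N\mapsto N-1$ leaves every $m_{j}$ unchanged and raises every $p_{i}$ by one. This gives the sum with $(N,b)$ replaced by $(N-1,b+1)$.
\item On the part $n_{1}=0$, the first factor becomes a constant in $N+a$, and what remains is a sum of type $(k-1,l)$ with the same $N$. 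If $k=0$, the analogous statement applies with the roles of the two blocks exchanged.
\end{itemize}
A symmetric splitting according to $n_{k+l}=N$ or $n_{k+l}<N$ shifts $a$ instead of $b$.

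To close the induction, we use the elementary identity $q^{m}/(1-q^{m})=1/(1-q^{m})-1$. It converts between the two kinds of factors at the cost of deleting a letter, which produces lower-length terms. The aim is to show that $S^{a,b}$ and $T^{a,b}$ satisfy the same recursion, with matching boundary values at $N=0$ and at $k=0$ or $l=0$, provided the offsets stay tied to the lengths in the way they are for the original sums ($a=l$, $b=k$). We expect these ties to be preserved when the recursions are combined with the conversion identity. If they are not, we would instead pass to the inner block sums $F_{k}(n_{k})$ and $G_{l}(n_{k+1})$ and prove a closed $q$-binomial formula for each. Concretely, the plan is to expand the $y$-block sum $\sum_{0\leq n_{1}\leq\cdots\leq n_{k}\leq c}\prod_{j}q^{m_{j}}/(1-q^{m_{j}})$ via the $q$-binomial theorem, compare it with the corresponding expansion of the $x$-block, and match the two total sums over $c$.

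The main obstacle is exactly this closure: finding a family of offsets $(a,b)$ that is stable under both splittings and on which $S=T$ still holds. For arbitrary offsets the identity presumably fails, since the $q$-shifts in Conjecture~\ref{conj:main} were chosen precisely to make it true. We would first test small cases ($k=l=1$, small $N$) symbolically to pin down the correct stable family, and only then set up the induction.
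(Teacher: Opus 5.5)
\textbf{There is a genuine gap: the decisive step, a recursion that closes on the original family of sums, is missing, and the closure you sketch cannot work as described.}

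What you have is sound as far as it goes. For $w=y^{k}x^{l}$ the claim is exactly $S_{k,l}(N)=T_{k,l}(N)$. This is the paper's statement that $f_{k,l}(N):=(-1)^{k}S_{k,l}(N)$ gets multiplied by $(-1)^{k+l}$ under $q\mapsto q^{-1}$. Your splittings at $n_{1}$ and at $n_{k+l}$ are precisely two of the three $q$-difference relations the paper uses.

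The closure fails for three reasons. First, $S^{a,b}_{k,l}(N)=T^{a,b}_{k,l}(N)$ is false off the tied family $(a,b)=(l,k)$. At $N=0$ it forces $ak=bl$, and at $l=0$ it fails outright: for example $T^{a,b}_{1,0}(N)-S^{a,b}_{1,0}(N)=N+1$. So there are no matching boundary values at $k=0$ or $l=0$. Second, $S$ and $T$ do not obey the same recursion. Splitting at $n_{1}$ gives the coefficient $q^{N+a}/(1-q^{N+a})$ for $S$ but $1/(1-q^{N+a})$ for $T$; the two recursions are conjugate under $q\mapsto q^{-1}$, not equal. The conversion $q^{m}/(1-q^{m})=1/(1-q^{m})-1$ only trades this for an extra untied term $S^{a,b}_{k-1,l}(N)$. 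Both of your splittings push $(a,b)$ off the tied family, and nothing in the plan brings it back. Third, the $q$-binomial fallback has no basis: already for $k=1$ the inner block sum is a $q$-harmonic sum with no product formula.

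The missing ideas are as follows. Prove the parity statement directly: find a recursion for $f_{k,l}(N-1)-f_{k,l}(N)$ whose right-hand side involves only tied sums of smaller total length, with coefficients of definite parity.

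This requires a third splitting, at the junction of the two blocks ($n_{k}=n_{k+1}$ versus $n_{k}<n_{k+1}$). It sends $(N,a,b)\mapsto(N-1,a+1,b+1)$. On the diagonal, partial fractions produce a $(k-1,l)$-term and a $(k,l-1)$-term with coefficients $R_{-(N+a+b)}$ and $R_{N+a+b}$ respectively, where $R_{n}:=1/(1-q^{n})$ and the normalization is that of $f$.

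These relations close up as follows. Start from the auxiliary offsets $(l-1,k)$ at level $N$:
\begin{itemize}
\item the endpoint splitting leads to the tied sum at level $N-1$;
\item the junction splitting leads to offsets $(l,k+1)$ at level $N-1$;
\item the start splitting links the tied sum at level $N$ to that same $(l,k+1)$-sum.
\end{itemize}
This closed cycle expresses $f_{k,l}(N-1)$ through $f_{k,l}(N)$, $f_{k,l-1}(N)$ and two untied $(k-1,l)$-sums. These two sums cancel when $k=1$. For $k\geq2$, one more round of junction, start and endpoint relations rewrites them via $f_{k-1,l}(N)$ and $f_{k-1,l-1}(N+1)$; in that round the $(k-2,l)$-terms cancel. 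The result is
\[
f_{k,l}(N-1)-f_{k,l}(N)=(R_{k+l-1+N}-R_{k+N})f_{k,l-1}(N)+(R_{1-k-l-N}-R_{-l-N})f_{k-1,l}(N)+E\,f_{k-1,l-1}(N+1).
\]
Here the first two coefficients are odd and $E$ is even under $q\mapsto q^{-1}$, by $R_{-m}=1-R_{m}$.

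Finally, the reversal symmetry, which in your notation is $S_{k,l}=T_{l,k}$, lets you assume $l\geq2$, so no $l=0$ term ever occurs. The explicit value at $N=0$, induction on $k+l$, and telescoping in $N$ then finish the proof.
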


\begin{proof}
Let $\mathbb{Q}(q)_{\mathrm{ev}}$ denote the subspace on which the
involution $q\mapsto q^{-1}$ acts by multiplication by $+1$, and
let $\mathbb{Q}(q)_{\mathrm{od}}$ denote the subspace on which it
acts by multiplication by $-1$. For $k\in\mathbb{Z}$, set 
\[
\mathbb{Q}(q)_{k}:=\begin{cases}
\mathbb{Q}(q)_{\mathrm{ev}}, & \text{if }k\text{ is even},\\
\mathbb{Q}(q)_{\mathrm{od}}, & \text{if }k\text{ is odd}.
\end{cases}
\]
Then it suffices to prove that 
\[
f_{k,l}(N)\in\mathbb{Q}(q)_{k+l}
\]
where we put
\[
f_{k,l}(N):=I_{q}(1;\{q^{-N-l}\}^{k},\{q^{k}\}^{l};q^{-N})
\]
for $k,l\geq1$. We prove the assertion by induction on $k+l$. First,
we note that
\begin{equation}
f_{k,l}(N)=(-1)^{k+l}\left.f_{l,k}(N)\right|_{q\mapsto q^{-1}}.\label{eq:f_kl_change}
\end{equation}
The case $(k,l)=(1,1)$ follows directly from (\ref{eq:f_kl_change}).
Thus, it remains to consider the case $k+l>2$. Since the assertions
for $(k,l)$ and $(l,k)$ are equivalent by (\ref{eq:f_kl_change}),
we may assume without loss of generality that $l\geq2$. Set 
\[
g_{k,l,m,n}(N):=I_{q}(1;\{q^{-N-m}\}^{k},\{q^{n}\}^{l};q^{-N})\qquad(k,l\geq0,\ m,n\geq1)
\]
and $R_{n}:=1/(1-q^{n})$. Then, we have
\begin{align}
g_{k,l,m,n}(N)-g_{k,l,m,n+1}(N-1) & =R_{-m-N}g_{k-1,l,m,n}(N)\qquad(k,m,n\geq1,l\geq0),\label{eq:diff_01}\\
g_{k,l,m,n}(N)-g_{k,l,m+1,n+1}(N-1) & =R_{-m-n-N}g_{k-1,l,m,n}(N)+R_{m+n+N}g_{k,l-1,m,n}(N)\qquad(k,l,m,n\geq1),\label{eq:diff_11}\\
g_{k,l,m,n}(N)-g_{k,l,m+1,n}(N-1) & =R_{n+N}g_{k,l-1,m,n}(N)\qquad(k\geq0,l,m,n\geq1)\label{eq:diff_10}
\end{align}
by the $q$-difference formula (Lemma \ref{lem:difference_formula}).
Then,
\begin{align*}
 & f_{k,l}(N-1)\\
 & =g_{k,l,l-1,k}(N)-R_{k+N}f_{k,l-1}(N)\qquad(\text{by (\ref{eq:diff_10})})\\
 & =g_{k,l,l,k+1}(N-1)+R_{1-k-l-N}g_{k-1,l,l-1,k}(N)+\left(R_{k+l-1+N}-R_{k+N}\right)f_{k,l-1}(N)\qquad(\text{by (\ref{eq:diff_11})})\\
 & =f_{k,l}(N)-R_{-l-N}g_{k-1,l,l,k}(N)+R_{-k-l+1-N}g_{k-1,l,l-1,k}(N)+\left(R_{k+l-1+N}-R_{k+N}\right)f_{k,l-1}(N)\qquad(\text{by (\ref{eq:diff_01})}).
\end{align*}
We use this identity in both cases $k=1$ and $k>1$. If $k=1$, since
$g_{0,l,m,n}$ does not depend on $m$, this implies
\begin{align*}
f_{1,l}(N-1)-f_{1,l}(N) & =\left(R_{l+N}-R_{1+N}\right)f_{1,l-1}(N).
\end{align*}
Here, the right-hand side is an element of $\mathbb{Q}(q)_{1+l}$
since $f_{1,l-1}(N)\in\mathbb{Q}(q)_{l}$ by the induction hypothesis
and $R_{l+N}-R_{1+N}\in\mathbb{Q}(q)_{\mathrm{od}}$ by definition.
Note that 
\[
f_{k,l}(0)=\frac{1}{(1-q^{-l})^{k}(1-q^{k})^{l}}\in\mathbb{Q}(q)_{k+l}
\]
 for any $k,l\geq1$. Therefore, the claim for this case follows as
\[
f_{1,l}(N)=-\sum_{N'=1}^{N}\left(f_{1,l}(N'-1)-f_{1,l}(N')\right)+f_{1,l}(0)\in\mathbb{Q}(q)_{1+l}.
\]
Let $k\geq2$. By (\ref{eq:diff_11}), we have
\begin{align}
g_{k-1,l,l,k}(N) & =g_{k-1,l,l-1,k-1}(N+1)-R_{-k-l+1-N}g_{k-2,l,l-1,k-1}(N+1)-R_{k+l-1+N}f_{k-1,l-1}(N+1),\label{eq:g_eq1}
\end{align}
 and, by (\ref{eq:diff_01}), 
\begin{equation}
g_{k-1,l,l-1,k}(N)=g_{k-1,l,l-1,k-1}(N+1)-R_{-l-N}g_{k-2,l,l-1,k-1}(N+1).\label{eq:g_eq2}
\end{equation}
Hence,
\begin{align*}
 & f_{k,l}(N-1)-f_{k,l}(N)-\left(R_{k+l-1+N}-R_{k+N}\right)f_{k,l-1}(N)\\
 & =-R_{-l-N}g_{k-1,l,l,k}(N)+R_{-k-l+1-N}g_{k-1,l,l-1,k}(N)\\
 & =(R_{-k-l+1-N}-R_{-l-N})g_{k-1,l,l-1,k-1}(N+1)+R_{-l-N}R_{k+l-1+N}f_{k-1,l-1}(N+1)\qquad(\text{by (\ref{eq:g_eq1}) and (\ref{eq:g_eq2})})\\
 & =(R_{-k-l+1-N}-R_{-l-N})f_{k-1,l}(N)+(R_{-k-l+1-N}-R_{-l-N})R_{k+N}f_{k-1,l-1}(N+1)\\
 & \quad+R_{-l-N}R_{k+l-1+N}f_{k-1,l-1}(N+1)\qquad(\text{by (\ref{eq:diff_10})})\\
 & =(R_{-k-l+1-N}-R_{-l-N})f_{k-1,l}(N)\\
 & \quad+\left((R_{-k-l+1-N}-R_{-l-N})R_{k+N}+R_{-l-N}R_{k+l-1+N}\right)f_{k-1,l-1}(N+1).
\end{align*}
Using the identity $R_{m}=1-R_{-m}$ for $m\in\mathbb{Z}\setminus\{0\}$,
we obtain
\begin{align*}
 & (R_{-k-l+1-N}-R_{-l-N})R_{k+N}+R_{-l-N}R_{k+l-1+N}\\
 & =(R_{k+N}-R_{k+l-1+N})(R_{l+N}-R_{k+l-1+N})+R_{k+l-1+N}(1-R_{k+l-1+N}).
\end{align*}
Therefore
\begin{align*}
 & f_{k,l}(N-1)-f_{k,l}(N)\\
 & =(R_{k+l-1+N}-R_{k+N})f_{k,l-1}(N)\\
 & \quad+(R_{-k-l+1-N}-R_{-l-N})f_{k-1,l}(N)\\
 & \quad+\left((R_{k+N}-R_{k+l-1+N})(R_{l+N}-R_{k+l-1+N})+R_{k+l-1+N}(1-R_{k+l-1+N})\right)f_{k-1,l-1}(N+1).
\end{align*}
By the induction hypothesis, we have
\[
f_{k,l-1}(N),\,f_{k-1,l}(N)\in\mathbb{Q}(q)_{k+l-1},\qquad f_{k-1,l-1}(N+1)\in\mathbb{Q}(q)_{k+l-2}=\mathbb{Q}(q)_{k+l}.
\]
Furthermore, since $R_{m}-R_{m'}\in\mathbb{Q}(q)_{\mathrm{od}}$ and
$R_{m}(1-R_{m})\in\mathbb{Q}(q)_{\mathrm{ev}}$ for all $m,m'\in\mathbb{Z}\setminus\{0\}$,
we have
\begin{align*}
R_{k+l-1+N}-R_{k+N},R_{-k-l+1-N}-R_{-l-N} & \in\mathbb{Q}(q)_{\mathrm{od}},\\
(R_{k+N}-R_{k+l-1+N})(R_{l+N}-R_{k+l-1+N})+R_{k+l-1+N}(1-R_{k+l-1+N}) & \in\mathbb{Q}(q)_{\mathrm{ev}}.
\end{align*}
It follows that
\[
f_{k,l}(N)-f_{k,l}(N-1)\in\mathbb{Q}(q)_{k+l}.
\]
Consequently,
\[
f_{k,l}(N)=\sum_{N'=1}^{N}\left(f_{k,l}(N')-f_{k,l}(N'-1)\right)+f_{k,l}(0)\in\mathbb{Q}(q)_{k+l}.
\]
This completes the proof.
\end{proof}

\subsection{The case $A=0$, $B=C=\infty$, and $D=1$}

In this subsection, we prove the duality conjecture for the case $A=0$,
$B=C=\infty$, and $D=1$.

For $w\in\mathbb{Q}W^{0}$, define
\[
f(w):=\lim_{n\to\infty}L_{q}(w)_{A=q^{-n},B=C=\infty,D=1}\in\mathbb{Q}[[q]].
\]
Then we have
\[
f((u_{1}v_{1})\cdots(u_{k}v_{k}))=I_{q}(0;[u_{1}^{(1)},v_{1}^{(1)}],\dots,[u_{k}^{(k)},v_{k}^{(k)}];1)
\]
where
\[
A^{(j)}:=0,B^{(j)}:=C^{(j)}:=\infty
\]
and
\[
D^{(j)}:=q^{-\#\{h\geq j\,\mid\,u_{h}v_{h}\in\{AB,AC,BC\}\}}.
\]

\begin{lem}
\label{lem:case_no_AD}If $w\in\mathbb{Q}\langle AB,AC,BC,BD,CD\rangle\cap\mathbb{Q}W^{0}$,
then
\[
f(w)=f(\tau(w)).
\]
\end{lem}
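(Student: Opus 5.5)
The plan is to reduce the statement to the Bradley--Zhao duality for $q$-MZVs. That duality is available through Theorem~\ref{thm:Yam_duality} at $z=0$, or directly via the Remark following it.

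First I compute the integrand of $f$ letter by letter under $A^{(j)}=0$, $B^{(j)}=C^{(j)}=\infty$, $D^{(j)}=q^{-\#\{h\geq j\,\mid\,u_hv_h\in\{AB,AC,BC\}\}}$.
\begin{itemize}
\item Each letter $e_{AB}$ or $e_{AC}$ contributes $\frac{t}{t-0}-\frac{t}{t-\infty}=1$.
\item Each letter $e_{BC}$ contributes $0$.
\item Each letter $e_{BD}$ or $e_{CD}$ contributes $-\frac{t}{t-D^{(j)}}$.
\end{itemize}
Hence $f(w)=0$ whenever $w$ contains $e_{BC}$. Since $\tau(e_{BC})=e_{BC}$, the word $\tau(w)$ then contains $e_{BC}$ as well, so $f(\tau(w))=0$ and the identity is trivial.

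So by linearity it suffices to treat words in the letters $e_{AB},e_{AC},e_{BD},e_{CD}$. On these $f$ only depends on the image under $e_{AB},e_{AC}\mapsto x$ and $e_{BD},e_{CD}\mapsto y$. Since
\[
\tau(e_{AB})=e_{CD},\quad\tau(e_{AC})=e_{BD},\quad\tau(e_{BD})=e_{AC},\quad\tau(e_{CD})=e_{AB},
\]
$\tau$ becomes the anti-automorphism exchanging $x$ and $y$. Admissibility means the word begins with $y$ and ends with $x$.

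Next I write such a word as $w=yx^{k_1-1}\cdots yx^{k_r-1}$ with $k_r\geq2$. The $y$ opening the $i$-th block then carries the parameter $D^{(j)}=q^{-\sum_{m\geq i}(k_m-1)}=:a_i$. Therefore
\[
f(w)=(-1)^rI_q(0;a_1,\{0\}^{k_1-1},\dots,a_r,\{0\}^{k_r-1};1).
\]
Applying Theorem~\ref{thm:series_expression} with $z_i=a_{i+1}/a_i=q^{k_i-1}$ (and $a_{r+1}=1$), the signs cancel. We obtain
\[
f(w)=\mathrm{Li}_{q;k_1,\dots,k_r}(q^{k_1-1},\dots,q^{k_r-1})=\zeta_q^{\mathrm{BZ}}(k_1,\dots,k_r).
\]
Equivalently, this is Proposition~\ref{prop:Lq_Yam} with all $\mu_i=1$ and $z=0$. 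In that case $\theta(y_1)=e_{BD}-e_{BC}$ and $\theta(x)=e_{AC}-e_{BC}$, whose $e_{BC}$-parts vanish here, and $C=z^{-1}q^{-k}\to\infty$.

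Finally, under the identification above, $\tau$ acts on $w$ as $\tau'$ does on $w(\tilde{\boldsymbol{k}})$ with $y_1\leftrightarrow y$. Thus $\tau(w)$ corresponds to the dual index $\tilde{\boldsymbol{k}}^\dagger$, which again has all $\mu=1$. The Bradley--Zhao duality (the $z=0$ case of Theorem~\ref{thm:Yam_duality}) then gives $f(w)=f(\tau(w))$.

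The main point needing care is the compatibility of the limit $A=q^{-n}\to$ "$A=0$" with the direct specialization $A=0$. For letters without $e_{AD}$ the only $A$-dependence is the lower summation bound, so the limit is just $I_q(0;\dots;1)$ by definition and this is routine. The other point to check is the bookkeeping of the $q$-shifts $D^{(j)}$ matching $z_i=q^{k_i-1}$, which I expect to be the step most prone to off-by-one errors.
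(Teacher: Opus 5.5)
Your proof is correct and follows the paper's route: discard words containing $e_{BC}$, identify $e_{AB},e_{AC}\mapsto x$ and $e_{BD},e_{CD}\mapsto y$ (the paper simply says it suffices to treat $\mathbb{Q}\langle AB,BD\rangle$), evaluate $f$ as a $\zeta_q^{\mathrm{BZ}}$-value via Theorem~\ref{thm:series_expression}, and invoke Bradley's duality; your version just supplies the bookkeeping the paper leaves implicit. One correction to your closing remark: the letters $e_{AB},e_{AC}$ also depend on $A$ through the integrand factor $t_j/(t_j-A^{(j)})$, so the passage to $A^{(j)}=0$ is a genuine $q$-adic limit. It is harmless: with lower bound $q^{n}$ and $t_1=q^{m_1}$, the leading $y$-factor has valuation at least $m_1+1$, while each $x$-factor is $1+O(q^{n-m_1+1})$. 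In any case the paper states this formula for $f$ before the lemma, so you can take it as given.
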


\begin{proof}
It suffices to consider only the case $w\in\mathbb{Q}\langle AB,BD\rangle\cap\mathbb{Q}W^{0}$.
Let $w=e_{BD}^{l_{1}}e_{AB}^{k_{1}}\cdots e_{BD}^{l_{r}}e_{AB}^{k_{r}}$.
Then,
\[
f(w)=\zeta_{q}^{\mathrm{BZ}}(\{1\}^{l_{1}-1},k_{1}+1,\dots,\{1\}^{l_{r}-1},k_{r}+1)
\]
and
\[
f(\tau(w))=\zeta_{q}^{\mathrm{BZ}}(\{1\}^{k_{r}-1},l_{r}+1,\dots,\{1\}^{k_{1}-1},l_{1}+1).
\]
Thus, the claim follows from the duality for $\zeta_{q}^{\mathrm{BZ}}$
\cite[Corollary~3]{Bradley_qMZVs}.
\end{proof}
\begin{lem}
\label{lem:erase_AD}For $w=w_{1}e_{AD}w_{2}\in W^{0}$, we have
\[
f(w)=f(w_{1}e_{AB}w_{2})+f(w_{1}e_{BD}w_{2})+f(w_{1}w_{2}).
\]
\end{lem}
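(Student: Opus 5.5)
The identity should follow by comparing integrands letter by letter under the specialization $A=0$, $B=C=\infty$, $D=1$, plus one application of Corollary~\ref{cor:difference_formula_0}. Let the distinguished letter $e_{AD}$ sit at position $j$ of $w=w_1e_{AD}w_2$. Admissibility forbids $w$ from beginning with $e_{AD}$ or ending with it, so $1<j<k$.

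\emph{Step 1: the integrand factors.} Under the specialization, $t/(t-A^{(i)})=1$ and $t/(t-B^{(i)})=t/(t-C^{(i)})=0$. Hence:
\begin{itemize}
\item $e_{AB}$ and $e_{AC}$ contribute the factor $1$, which is $t/(t-0)$;
\item $e_{BD}$ and $e_{CD}$ contribute $-t/(t-D^{(i)})$;
\item $e_{BC}$ contributes $0$;
\item $e_{AD}$ contributes $1-t/(t-D^{(i)})$.
\end{itemize}
In $f(w)$ we split the factor at position $j$ into its two summands. The summand $-t/(t-D^{(j)})$ gives exactly $f(w_1e_{BD}w_2)$. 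This is because $D^{(i)}$ only counts letters in $\{AB,AC,BC\}$ at positions $\ge i$, and neither $e_{AD}$ nor $e_{BD}$ is counted, so all shifts agree. The summand $1$ gives
\[
X:=I_q(0;b_1,\dots,b_{j-1},0,c_{j+1},\dots,c_k;1).
\]
Here $b_i$ and $c_i$ are the parameters (equal to $0$ or $D^{(i)}$) attached to the letters of $w_1$ and $w_2$, with the shifts computed in $w$.

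\emph{Step 2: identify $f(w_1e_{AB}w_2)$ and $f(w_1w_2)$.} In $w_1e_{AB}w_2$ the new $e_{AB}$ is counted in $D^{(i)}$ for every $i\le j$. So the parameters at positions $i<j$ become $b_iq^{-1}$ (zeros stay zero), while those at positions $>j$ are unchanged:
\[
f(w_1e_{AB}w_2)=I_q(0;b_1q^{-1},\dots,b_{j-1}q^{-1},0,c_{j+1},\dots,c_k;1).
\]
Deleting $e_{AD}$ changes no shift, so $f(w_1w_2)=I_q(0;b_1,\dots,b_{j-1},c_{j+1},\dots,c_k;1)$.

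\emph{Step 3: the key manipulation.} Since the lower endpoint is $0$, the substitution $t\mapsto qt$ gives the scaling invariance
\[
I_q(0;a_1,\dots,a_k;y)=I_q(0;\lambda a_1,\dots,\lambda a_k;\lambda y)\qquad (\lambda\in q^{\mathbb{Z}}),
\]
which holds because each factor $t/(t-a)$ is homogeneous of degree $0$. Applying this with $\lambda=q$ gives
\[
f(w_1e_{AB}w_2)=I_q(0;b_1,\dots,b_{j-1},0,c_{j+1}q,\dots,c_kq;q).
\]
Corollary~\ref{cor:difference_formula_0}, taken with $a_0=0$ (passing to the limit from $a_0=q^{-n}$) and $a_{k+1}=1$, then yields
\[
X-f(w_1e_{AB}w_2)=f(w_1w_2).
\]
Adding the contribution $f(w_1e_{BD}w_2)$ from Step 1 gives the claim.

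The main thing to check is the bookkeeping of the shifts $D^{(i)}$: the inserted $e_{AB}$ shifts exactly the positions $\le j$, which is the part that must be rescaled. One also has to justify the corollary and the scaling at the endpoint $0$, either through the limit $A=q^{-n}\to0$ or directly in $\mathbb{Q}[[q]]$, where all sums converge formally.
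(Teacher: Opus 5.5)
Your proof is correct and follows the paper's route: split the $e_{AD}$ factor into its $[0,\infty]$ and $[\infty,D^{(j)}]$ parts, read off the second as $f(w_1e_{BD}w_2)$, and turn the first into $f(w_1e_{AB}w_2)+f(w_1w_2)$ via Corollary~\ref{cor:difference_formula_0}, using that the inserted $e_{AB}$ rescales exactly the parameters at positions $<j$ by $q^{-1}$. Your explicit scaling step $I_q(0;\lambda a_1,\dots,\lambda a_k;\lambda y)=I_q(0;a_1,\dots,a_k;y)$ also shows that the remainder term carries the unshifted $w_1$-parameters, as $f(w_1w_2)$ requires; the paper's intermediate display attaches an extra $q^{-1}$ to them, which appears to be a typo.
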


\begin{proof}
Let $w_{1}=e_{u_{1}v_{1}}\cdots e_{u_{l}v_{l}}$ and $e_{AD}w_{2}=e_{u_{l+1}v_{l+1}}\cdots e_{u_{k}v_{k}}$.
Then
\begin{align*}
f(w) & =I_{q}(0;[u_{1}^{(1)},v_{1}^{(1)}],\dots,[u_{l}^{(l)},v_{l}^{(l)}],[0,D^{(l+1)}],[u_{l+2}^{(l+2)},v_{l+2}^{(l+2)}]\dots,[u_{k}^{(k)},v_{k}^{(k)}];1)\\
 & =I_{q}(0;[u_{1}^{(1)},v_{1}^{(1)}],\dots,[u_{l}^{(l)},v_{l}^{(l)}],[0,\infty],[u_{l+2}^{(l+2)},v_{l+2}^{(l+2)}]\dots,[u_{k}^{(k)},v_{k}^{(k)}];1)\\
 & \qquad+I_{q}(0;[u_{1}^{(1)},v_{1}^{(1)}],\dots,[u_{l}^{(l)},v_{l}^{(l)}],[\infty,D^{(l+1)}],[u_{l+2}^{(l+2)},v_{l+2}^{(l+2)}]\dots,[u_{k}^{(k)},v_{k}^{(k)}];1)
\end{align*}
where
\[
A^{(j)}:=0,B^{(j)}:=C^{(j)}:=\infty
\]
and
\[
D^{(j)}:=q^{-\#\{h\geq j\,\mid\,u_{h}v_{h}\in\{AB,AC,BC\}\}}.
\]
The first term
\[
I_{q}(0;[u_{1}^{(1)},v_{1}^{(1)}],\dots,[u_{l}^{(l)},v_{l}^{(l)}],[0,\infty],[u_{l+2}^{(l+2)},v_{l+2}^{(l+2)}],\dots,[u_{k}^{(k)},v_{k}^{(k)}];1)
\]
is equal to
\begin{align*}
 & I_{q}(0;[u_{1}^{(1)}q^{-1},v_{1}^{(1)}q^{-1}],\dots,[u_{l}^{(l)}q^{-1},v_{l}^{(l)}q^{-1}],[0,\infty],[u_{l+2}^{(l+2)},v_{l+2}^{(l+2)}],\dots,[u_{k}^{(k)},v_{k}^{(k)}];1)\\
 & \quad+I_{q}(0;[u_{1}^{(1)}q^{-1},v_{1}^{(1)}q^{-1}],\dots,[u_{l}^{(l)}q^{-1},v_{l}^{(l)}q^{-1}],[u_{l+2}^{(l+2)},v_{l+2}^{(l+2)}],\dots,[u_{k}^{(k)},v_{k}^{(k)}];1)\\
 & =f(w_{1}e_{AB}w_{2})+f(w_{1}w_{2}),
\end{align*}
by Corollary~\ref{cor:difference_formula_0}. The second term
\[
I_{q}(0;[u_{1}^{(1)},v_{1}^{(1)}],\dots,[u_{l}^{(l)},v_{l}^{(l)}],[\infty,D^{(l+1)}],[u_{l+2}^{(l+2)},v_{l+2}^{(l+2)}]\dots,[u_{k}^{(k)},v_{k}^{(k)}];1)
\]
is equal to 
\[
f(w_{1}e_{BD}w_{2}).
\]
Thus
\[
f(w)=f(w_{1}e_{AB}w_{2})+f(w_{1}e_{BD}w_{2})+f(w_{1}w_{2}).
\]
\end{proof}
\begin{thm}
For $w\in W^{0}$ we have
\[
f(w)=f(\tau(w)).
\]
\end{thm}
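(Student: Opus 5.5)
We argue by induction on the number of occurrences of $e_{AD}$ in $w$, extended $\mathbb{Q}$-linearly to $\mathbb{Q}W^{0}$. If $w$ contains no $e_{AD}$, then $w\in\mathbb{Q}\langle AB,AC,BC,BD,CD\rangle\cap\mathbb{Q}W^{0}$ and the claim is Lemma~\ref{lem:case_no_AD}.

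For the induction step, write $w=w_{1}e_{AD}w_{2}$, choosing one occurrence of $e_{AD}$. Lemma~\ref{lem:erase_AD} gives
\[
f(w)=f(w_{1}e_{AB}w_{2})+f(w_{1}e_{BD}w_{2})+f(w_{1}w_{2}).
\]
Since $\tau(e_{AD})=e_{AD}$ and $\tau$ is an anti-automorphism, $\tau(w)=\tau(w_{2})e_{AD}\tau(w_{1})$. Applying Lemma~\ref{lem:erase_AD} to this word gives
\[
f(\tau(w))=f(\tau(w_{2})e_{AB}\tau(w_{1}))+f(\tau(w_{2})e_{BD}\tau(w_{1}))+f(\tau(w_{2})\tau(w_{1})).
\]
We have $\tau(e_{AB})=e_{CD}$ and $\tau(e_{BD})=e_{AC}$. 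Hence the right-hand terms are $f(\tau(w_{1}e_{CD}w_{2}))$, $f(\tau(w_{1}e_{AC}w_{2}))$ and $f(\tau(w_{1}w_{2}))$.

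Under the specialization $B=C=\infty$, the value $f$ does not distinguish $e_{AB}$ from $e_{AC}$, nor $e_{BD}$ from $e_{CD}$. This holds because $B^{(j)}=C^{(j)}=\infty$, and $D^{(j)}$ depends only on the set $\{AB,AC,BC\}$, which treats $AB$ and $AC$ alike. Therefore $f(\tau(w_{1}e_{CD}w_{2}))=f(\tau(w_{1}e_{BD}w_{2}))$, and similarly for the other pair. We must also check that $f\circ\tau$ is compatible with this identification; this follows since $\tau$ maps the pair $\{AB,AC\}$ to $\{CD,BD\}$ and vice versa. So
\[
f(\tau(w))=f(\tau(w_{1}e_{BD}w_{2}))+f(\tau(w_{1}e_{AB}w_{2}))+f(\tau(w_{1}w_{2})).
\]
Each of the three words has one fewer $e_{AD}$, so the induction hypothesis matches these terms with the three terms in the expansion of $f(w)$.

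The main obstacle is admissibility of the words produced. Lemma~\ref{lem:erase_AD} and the induction hypothesis need $w_{1}e_{AB}w_{2}$, $w_{1}e_{BD}w_{2}$ and $w_{1}w_{2}$ in $W^{0}$ (or at least that $f$ is defined and the lemmas apply). Because $w$ is admissible, $e_{AD}$ cannot be the first or last letter, so $w_{1}$ and $w_{2}$ are nonempty. Moreover $w_{1}$ begins with a letter not of the form $e_{Av}$, and $w_{2}$ ends with a letter not of the form $e_{uD}$. Hence all three words remain admissible, including $w_{1}w_{2}$, and the induction goes through.
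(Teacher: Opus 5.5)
Your proof is correct and is essentially the paper's: induction on the number of occurrences of $e_{AD}$, with Lemma~\ref{lem:case_no_AD} as the base case and Lemma~\ref{lem:erase_AD} applied to both $w$ and $\tau(w)=\tau(w_{2})e_{AD}\tau(w_{1})$. The differences are cosmetic. You identify $e_{AB}\sim e_{AC}$ and $e_{BD}\sim e_{CD}$ before invoking the induction hypothesis, whereas the paper does so afterwards, and you explicitly check that the shortened words remain admissible, which the paper leaves implicit.
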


\begin{proof}
It suffices to consider the case where $w$ is a monomial. Let $\kappa(w)$
denote the number of occurrences of $AD$ in $w$. We prove the claim
by induction on $\kappa(w)$. The case $\kappa(w)=0$ follows from
Lemma~\ref{lem:case_no_AD}. Assume that $\kappa(w)>0$. Then $w$
can be written as $w=w_{1}e_{AD}w_{2}$. Then
\begin{align*}
f(\tau(w)) & =f(\tau(w_{2})e_{AD}\tau(w_{1}))\\
 & =f(\tau(w_{2})e_{AB}\tau(w_{1}))+f(\tau(w_{2})e_{BD}\tau(w_{1}))+f(\tau(w_{2})\tau(w_{1}))\qquad(\text{by Lemma~\ref{lem:erase_AD}})\\
 & =f(w_{1}e_{CD}w_{2})+f(w_{1}e_{AC}w_{2})+f(w_{1}w_{2})\qquad(\text{by the induction hypothesis})\\
 & =f(w_{1}e_{BD}w_{2})+f(w_{1}e_{AB}w_{2})+f(w_{1}w_{2})\\
 & =f(w)\qquad(\text{by Lemma~\ref{lem:erase_AD}}),
\end{align*}
which completes the proof.
\end{proof}

\subsection{The case $A=D$}

In this subsection, we prove the duality conjecture for the case $A=D$.
\begin{thm}
If $A=D$, then Conjecture~\ref{conj:main} holds.
\end{thm}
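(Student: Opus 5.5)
The plan rests on the observation that $A=D$ means $N=0$, so the $q$-integral collapses to a single summation point. Indeed, in $K$ we have $A=q^{0}D$, so the only $t$ with $A\trianglelefteq t\trianglelefteq D$ is $t=A$. Hence for $w=e_{u_{1}v_{1}}\cdots e_{u_{k}v_{k}}\in W^{0}$
\[
L_{q}(w)=\prod_{j=1}^{k}\varphi\bigl(u_{j}^{(j)},v_{j}^{(j)}\bigr),\qquad\varphi(a,b):=\frac{A}{A-a}-\frac{A}{A-b}=\frac{A(a-b)}{(A-a)(A-b)}.
\]
I would first check that every factor is well defined. Admissibility makes the first letter lie in $\{BC,BD,CD\}$ and the last letter lie in $\{AB,AC,BC\}$. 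Therefore the exponent in $A^{(j)}$ is $\geq1$ whenever $A$ actually occurs in a letter, and likewise the exponent in $D^{(j)}$ is $\leq-1$. So $A^{(j)},D^{(j)}\neq A$, and the factors involving $B,C$ are nonzero because $B,C$ are generic.

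Next I would rewrite $L_{q}(\tau(w))$ in the same way. I would use the description (\ref{eq:L_tau}) with the shifts $A^{[j]},\dots,D^{[j]}$ from the proof for the case $AD=BCq^{m(w)}$, together with Lemma~\ref{lem:inv_trans} for $x=y=A$. Here the involution $t\mapsto A^{2}/t$ fixes the single point. The elementary identity $\varphi(A^{2}/b,A^{2}/a)=\varphi(a,b)$ gives
\[
L_{q}(\tau(w))=\prod_{j=1}^{k}\varphi\Bigl(\frac{A^{2}}{\tau(u_{j})^{[j]}},\frac{A^{2}}{\tau(v_{j})^{[j]}}\Bigr).
\]
As in that earlier proof, the endpoints labelled $A$ and $D$ match exactly: $A^{2}/D^{[j]}=A^{(j)}$ and $A^{2}/A^{[j]}=D^{(j)}$ once $A=D$. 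Thus every factor coming from a letter $e_{AD}$ agrees termwise. The comparison then reduces to the factors of the form $\frac{A}{A-Bq^{e}}$ and $\frac{A}{A-Cq^{e}}$ on the left, and $\frac{A}{A-A^{2}q^{-e'}/C}$ and $\frac{A}{A-A^{2}q^{-e'}/B}$ on the right.

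The main obstacle is exactly this comparison. Termwise equality fails: for instance, $\tau$ sends $e_{AB}$ to $e_{CD}$, so a $B$-factor on one side is paired with a $C$-factor on the other. So the identity has to be a genuine identity of products of rational functions in $B,C,q$, and not a letter-by-letter one. My first attempt would be induction on $k$. Peeling off the last letter of $w$, which is in $\{AB,AC,BC\}$, corresponds to peeling off the first letter of $\tau(w)$. The shifts $u^{(j)},v^{(j)}$ change by a single uniform power of $q$ on the appropriate side, which is the same bookkeeping as in the $q$-difference formula (Lemma~\ref{lem:difference_formula}). I would then compare the two resulting one-step recursions. I would expect the pairs of factors $\varphi$ in $B$ and in $C$ to combine, via partial fractions, into expressions symmetric under the exchange $(B,q^{e})\leftrightarrow(A^{2}/C,q^{-e'})$. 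If this recursion does not close directly, I would fall back on expanding each $\varphi$ into its two terms and matching the $2^{k}$ monomials of both sides, using the explicit exponent counts to pair monomials bijectively.
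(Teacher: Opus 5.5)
\textbf{There is a genuine gap: the identity between the two products is never proved, and both routes you sketch fail.}

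Your reduction is correct, and it is also the paper's first step. At $A=D$ the only summation point is $t=A$, so $L_q(w)=\prod_j\omega_j$ and $L_q(\tau(w))=\prod_j\omega'_j$ are products of one-point factors. Your admissibility argument correctly shows that no factor has a pole. But the theorem \emph{is} the identity between these two products.

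Matching the $2^k$ expanded terms is impossible in general. Take $w=e_{BD}e_{AB}$, so $\tau(w)=e_{CD}e_{AC}$. Then
\[
L_q(w)=\Bigl(\frac{A}{A-Bq}-\frac{A}{A-Aq^{-1}}\Bigr)\Bigl(\frac{A}{A-Aq}-\frac{A}{A-Bq^{2}}\Bigr),\qquad L_q(\tau(w))=\Bigl(\frac{A}{A-Cq}-\frac{A}{A-Aq^{-1}}\Bigr)\Bigl(\frac{A}{A-Aq}-\frac{A}{A-Cq^{2}}\Bigr).
\]
The left side involves only $B$ and the right side only $C$. Both equal $q/(1-q)^{2}$ only because the $B$-dependence (resp.\ $C$-dependence) cancels \emph{across different factors}.

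The induction on $k$ does not close either.
\begin{itemize}
\item Deleting the last letter of $w$ lowers the exponent of every $D^{(j)}$, $j<k$, by one. What remains is therefore not $L_q$ of the truncated word at $A=D$. In the example, the truncated word $e_{BD}$ is not even admissible, and its factor has a pole at $A=D$.
\item The $q$-difference formula (Lemma~\ref{lem:difference_formula}) replaces $D$ by $Dq^{\pm1}$, i.e.\ it changes $N$. So it cannot keep you in the case $A=D$.
\end{itemize}

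\textbf{The missing idea is an explicit telescoping function for the discrepancy between paired factors.} Pair $\omega_j$ with $\omega'_{k+1-j}$, as you suggest. For $0\le j\le k$, let $a_j,b_j,c_j,d_j$ be the $q$-exponents attached to $A,B,C,D$ at position $j$ of $w$, with the ``future'' counts taken over $h>j$; this changes no factor.

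Then the exponents attached to $A,B,C,D$ in the $(k+1-j)$-th factor of $\tau(w)$ are $-d_j$, $1-a_j+b_j-d_j$, $1-a_j+c_j-d_j$ and $-a_j$. Hence $\omega_j/\omega'_{k+1-j}$ is a power of $q$ times a ratio of at most four linear factors of the form $A-Bq^{l}$, $A-Cq^{l}$.

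The increment $(a_j,b_j,c_j,d_j)-(a_{j-1},b_{j-1},c_{j-1},d_{j-1})$ is a fixed vector depending only on the letter $u_j$. Because of this, the ratio equals $\Phi(j)/\Phi(j-1)$ with
\[
\Phi(j)=q^{a_jd_j}\frac{\prod_{l=1-a_j+b_j-d_j}^{b_j-d_j}(A-Bq^{l})\prod_{l=1-a_j+c_j-d_j}^{c_j-d_j}(A-Cq^{l})}{\prod_{l=1-a_j+b_j}^{b_j}(A-Bq^{l})\prod_{l=1-a_j+c_j}^{c_j}(A-Cq^{l})}.
\]
Since $a_0=d_k=0$, we have $\Phi(0)=\Phi(k)=1$, so $L_q(w)/L_q(\tau(w))$ telescopes to $1$.

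In the example above, $\omega_1/\omega'_2=\Phi(1)=q^{-1}\frac{(A-Bq^{2})(A-Cq^{2})}{(A-Bq)(A-Cq)}$ and $\omega_2/\omega'_1=\Phi(1)^{-1}$. This telescoping product is the key step your proposal lacks; neither a recursion in $k$ nor a term-by-term bijection supplies it.
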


\begin{proof}
Let
\[
w=u_{1}\cdots u_{k}
\]
and
\[
\tau(w)=u_{1}'\cdots u_{k}'.
\]
For $0\leq j\leq k$, put
\begin{align*}
a_{j} & :=\#\{h\le j\mid u_{h}\in\{BC,BD,CD\}\},\\
b_{j} & :=\#\{h\le j\mid u_{h}\notin\{AC,AD\}\}+\#\{h>j\mid u_{h}=CD\},\\
c_{j} & :=\#\{h\le j\mid u_{h}\notin\{AB,AD\}\}+\#\{h>j\mid u_{h}=BD\},\\
d_{j} & :=-\#\{h>j\mid u_{h}\in\{AB,AC,BC\}\},\\
a_{j}' & :=\#\{h<j\mid u_{h}'\in\{BC,BD,CD\}\},\\
b_{j}' & :=1+\#\{h<j\mid u_{h}'\notin\{AC,AD\}\}+\#\{h\geq j\mid u_{h}'=CD\},\\
c_{j}' & :=1+\#\{h<j\mid u_{h}'\notin\{AB,AD\}\}+\#\{h\geq j\mid u_{h}'=BD\},\\
d_{j}' & :=-\#\{h\geq j\mid u_{h}'\in\{AB,AC,BC\}\}.
\end{align*}
Then
\[
L_{q}(w)=\prod_{j=1}^{k}\omega_{j},\quad L_{q}(\tau(w))=\prod_{j=1}^{k}\omega_{j}'
\]
where
\[
\omega_{j}:=\begin{cases}
\frac{A}{A-Aq^{a_{j}}}-\frac{A}{A-Bq^{b_{j}}} & \text{if }u_{j}=e_{AB}\\
\frac{A}{A-Aq^{a_{j}}}-\frac{A}{A-Cq^{c_{j}}} & \text{if }u_{j}=e_{AC}\\
\frac{A}{A-Aq^{a_{j}}}-\frac{A}{A-Aq^{d_{j}}} & \text{if }u_{j}=e_{AD}\\
\frac{A}{A-Bq^{b_{j}}}-\frac{A}{A-Cq^{c_{j}}} & \text{if }u_{j}=e_{BC}\\
\frac{A}{A-Bq^{b_{j}}}-\frac{A}{A-Aq^{d_{j}}} & \text{if }u_{j}=e_{BD}\\
\frac{A}{A-Cq^{c_{j}}}-\frac{A}{A-Aq^{d_{j}}} & \text{if }u_{j}=e_{CD}
\end{cases}\qquad\omega_{j}':=\begin{cases}
\frac{A}{A-Aq^{a_{j}'}}-\frac{A}{A-Bq^{b_{j}'}} & \text{if }u_{j}'=e_{AB}\\
\frac{A}{A-Aq^{a_{j}'}}-\frac{A}{A-Cq^{c_{j}'}} & \text{if }u_{j}'=e_{AC}\\
\frac{A}{A-Aq^{a_{j}'}}-\frac{A}{A-Aq^{d_{j}'}} & \text{if }u_{j}'=e_{AD}\\
\frac{A}{A-Bq^{b_{j}'}}-\frac{A}{A-Cq^{c_{j}'}} & \text{if }u_{j}'=e_{BC}\\
\frac{A}{A-Bq^{b_{j}'}}-\frac{A}{A-Aq^{d_{j}'}} & \text{if }u_{j}'=e_{BD}\\
\frac{A}{A-Cq^{c_{j}'}}-\frac{A}{A-Aq^{d_{j}'}} & \text{if }u_{j}'=e_{CD}.
\end{cases}
\]
Then, since
\begin{align*}
a_{k+1-j}' & =\#\{h>j\mid u_{h}\in\{AB,AC,BC\}\}=-d_{j},\\
b_{k+1-j}' & =1+\#\{h>j\mid u_{h}\notin\{AD,BD\}\}+\#\{h\leq j\mid u_{h}=AB\}=1-a_{j}+b_{j}-d_{j},\\
c_{k+1-j}' & =1+\#\{h>j\mid u_{h}\notin\{AD,CD\}\}+\#\{h\leq j\mid u_{h}=AC\}=1-a_{j}+c_{j}-d_{j},\\
d_{k+1-j}' & =-\#\{h\leq j\mid u_{h}\in\{BC,BD,CD\}\}=-a_{j},
\end{align*}
we have
\[
\omega_{k+1-j}':=\begin{cases}
\frac{A}{A-Cq^{1-a_{j}+c_{j}-d_{j}}}-\frac{A}{A-Aq^{-a_{j}}} & \text{if }u_{j}=e_{AB}\\
\frac{A}{A-Bq^{1-a_{j}+b_{j}-d_{j}}}-\frac{A}{A-Aq^{-a_{j}}} & \text{if }u_{j}=e_{AC}\\
\frac{A}{A-Aq^{-d_{j}}}-\frac{A}{A-Aq^{-a_{j}}} & \text{if }u_{j}=e_{AD}\\
\frac{A}{A-Bq^{1-a_{j}+b_{j}-d_{j}}}-\frac{A}{A-Cq^{1-a_{j}+c_{j}-d_{j}}} & \text{if }u_{j}=e_{BC}\\
\frac{A}{A-Aq^{-d_{j}}}-\frac{A}{A-Cq^{1-a_{j}+c_{j}-d_{j}}} & \text{if }u_{j}=e_{BD}\\
\frac{A}{A-Aq^{-d_{j}}}-\frac{A}{A-Bq^{1-a_{j}+b_{j}-d_{j}}} & \text{if }u_{j}=e_{CD}.
\end{cases}
\]
Therefore, we have
\[
\frac{\omega_{j}}{\omega_{k+1-j}'}=\begin{cases}
q^{a_{j}}\frac{(A-Bq^{-a_{j}+b_{j}})(A-Cq^{1-a_{j}+c_{j}-d_{j}})}{(A-Bq^{b_{j}})(A-Cq^{1+c_{j}-d_{j}})} & \text{if }u_{j}=e_{AB}\\
q^{a_{j}}\frac{(A-Cq^{-a_{j}+c_{j}})(A-Bq^{1-a_{j}+b_{j}-d_{j}})}{(A-Cq^{c_{j}})(A-Bq^{1+b_{j}-d_{j}})} & \text{if }u_{j}=e_{AC}\\
1 & \text{if }u_{j}=e_{AD}\\
q^{a_{j}+d_{j}-1}\frac{(A-Bq^{1-a_{j}+b_{j}-d_{j}})(A-Cq^{1-a_{j}+c_{j}-d_{j}})}{(A-Bq^{b_{j}})(A-Cq^{c_{j}})} & \text{if }u_{j}=e_{BC}\\
q^{d_{j}}\frac{(A-Bq^{b_{j}-d_{j}})(A-Cq^{1-a_{j}+c_{j}-d_{j}})}{(A-Bq^{b_{j}})(A-Cq^{1-a_{j}+c_{j}})} & \text{if }u_{j}=e_{BD}\\
q^{d_{j}}\frac{(A-Cq^{c_{j}-d_{j}})(A-Bq^{1-a_{j}+b_{j}-d_{j}})}{(A-Cq^{c_{j}})(A-Bq^{1-a_{j}+b_{j}})} & \text{if }u_{j}=e_{CD}.
\end{cases}
\]
For $0\leq j\leq k$, put
\[
\Phi(j):=q^{a_{j}d_{j}}\frac{\prod_{l=1-a_{j}+b_{j}-d_{j}}^{b_{j}-d_{j}}(A-Bq^{l})\prod_{l=1-a_{j}+c_{j}-d_{j}}^{c_{j}-d_{j}}(A-Cq^{l})}{\prod_{l=1-a_{j}+b_{j}}^{b_{j}}(A-Bq^{l})\prod_{l=1-a_{j}+c_{j}}^{c_{j}}(A-Cq^{l})}.
\]
Then, since
\[
(a_{j},b_{j},c_{j},d_{j})=(a_{j-1},b_{j-1},c_{j-1},d_{j-1})+\begin{cases}
(0,1,0,1) & \text{if }u_{j}=e_{AB}\\
(0,0,1,1) & \text{if }u_{j}=e_{AC}\\
(0,0,0,0) & \text{if }u_{j}=e_{AD}\\
(1,1,1,1) & \text{if }u_{j}=e_{BC}\\
(1,1,0,0) & \text{if }u_{j}=e_{BD}\\
(1,0,1,0) & \text{if }u_{j}=e_{CD},
\end{cases}
\]
as a vector in $\mathbb{Z}^{4}$, we have
\[
\frac{\Phi(j)}{\Phi(j-1)}=\frac{\omega_{j}}{\omega_{k+1-j}'}.
\]
Thus,
\[
\frac{L_{q}(w)}{L_{q}(\tau(w))}=\frac{\prod_{j=1}^{k}\omega_{j}}{\prod_{j=1}^{k}\omega_{k+1-j}'}=\prod_{j=1}^{k}\frac{\Phi(j)}{\Phi(j-1)}=\frac{\Phi(k)}{\Phi(0)}=1.
\]
This proves the theorem.
\end{proof}

\end{document}